\def\a{a}
\def\b{\vec{\mathbf{\beta}}}
\newtheorem{remark}[theorem]{Remark}
\title{Preconditioned HSS Method for Finite Element Approximations
of Convection-Diffusion Equations\thanks{July, 23, 2008 -
The work
of the first author was partially supported by MIUR,
grant number 2006013187 and the work of the second author was partially
supported by MIUR, grant number 2006017542.}}
\author{Alessandro Russo\thanks{Dipartimento di Matematica e Applicazioni,
Universit\`a di Milano Bicocca, via Cozzi 53, 20125 Milano, Italy
({\tt alessandro.russo@unimib.it}).}
        \and Cristina Tablino Possio\thanks{Dipartimento di Matematica e Applicazioni,
Universit\`a di Milano Bicocca, via Cozzi 53, 20125 Milano, Italy
({\tt cristina.tablinopossio@unimib.it}).}}
\begin{document}
\maketitle
\begin{abstract}
A two-step preconditioned iterative method based on the Hermitian/Skew-Hermitian splitting
is applied to the solution of nonsymmetric linear systems arising from the Finite Element approximation
of convection-diffusion equations.
The theoretical spectral analysis focuses on the case of matrix sequences related to
FE approximations on uniform structured meshes, by referring to spectral tools
derived from Toeplitz theory. In such a setting, if the problem is coercive, and the diffusive and convective coefficients are regular enough, then the proposed preconditioned matrix sequence shows a strong clustering at unity, i.e.,
a superlinear preconditioning sequence is obtained. Under the same assumptions, the optimality of the PHSS method is
proved and some numerical experiments confirm the theoretical results.
Tests on unstructured meshes are also presented, showing the some convergence behavior.
%
\end{abstract}
\begin{keywords}
Matrix sequences, clustering, preconditioning, non-Hermitian matrix, splitting iteration methods,
Finite Element approximations
\end{keywords}
\begin{AMS}
65F10, 65N22, 15A18, 15A12, 47B65
\end{AMS}
\pagestyle{myheadings} \thispagestyle{plain}
\markboth{A. RUSSO AND C. TABLINO POSSIO}{PHSS method for Finite Element
approximations of the convection-diffusion equation}
%
\section{Introduction} \label{sez:introduction}
The paper deals with the numerical solution of linear systems
arising from the Finite Element approximation of the
elliptic convection-diffusion problem
\begin{equation} \label{eq:modello}
\left \{
\begin{array}{l}
%
%
\mathrm{div} \left(-\a(\mathbf{x}) \nabla u +\b(\mathbf{x}) u \right)
=f, \quad \mathbf{x}\in \Omega,\\
u_{|\partial \Omega}=0.
\end{array}
\right.
\end{equation}
%
We apply the two-step iterative method based on the Hermitian/Skew-Hermitian splitting (HSS)
of the coefficient matrix proposed in \cite{BGN-SIMAX-2003} for the solution of nonsymmetric
linear systems whose real part is coercive. The aim is to study the preconditioning effectiveness
when the Preconditioned HSS (PHSS) method is applied, as proposed in \cite{BGST-NM-2005}.
According to the PHSS convergence properties, the preconditioning strategy for the
matrix sequence $\{A_n(\a,\b)\}$ can be tuned only with respect to the
stiffness matrices. 
This allows to adopt the same preconditioning proposal
just analyzed in the case of  Finite Difference (FD) and Finite Element (FE) approximations of the diffusion problem in
\cite{S-NM-1999,ST-LAA-1999,ST-ETNA-2000,ST-ETNA-2003,ST-SIMAX-2003,ST-NA-2001,BS-SINUM-2007},
and, more recently, in the case of FD approximations of (\ref{eq:modello}).\newline
More precisely, we consider  the preconditioning matrix sequence $\{P_n(\a)\}$  defined as
$P_n(\a)=D_n^{{1}/{2}}(\a) A_n(1,0) D_n^{{1}/{2}}(\a)$, where $D_n(\a)=\mathrm{diag}(A_n(\a,0))\mathrm{diag}\!^{-1}(A_n(1,0))$,
i.e., the suitable scaled  main diagonal of $A_n(\a,0)$.
In such a way, the solution of the linear system with coefficient matrix $A_n(\a,\b)$ is reduced to computations
involving diagonals and the matrix $A_n(1,0)$.
In the case of uniform structured meshes the latter task can be efficiently performed by means of fast Poisson solvers,
among which we can list those based on the cyclic reduction idea (see e.g.
\cite{BDGG-SINUM-1971,D-SIAMRev-1970,S-SIAMRev-1977}) and several specialized multigrid methods
(see e.g. \cite{H-Springer-1985,S-NM-2002}). \newline
Our theoretical analysis focuses on the case of matrix sequences $\{A_n(\a,\b)\}$ related to
FE approximations on uniform structured meshes, since the powerful spectral tools
derived from Toeplitz theory \cite{BG-LAA-1998,BS-Springer-1998,S-LAA-1998,S-LAA-1998-2} greatly facilitate
the required spectral analysis. In such a setting, under proper assumptions on $\a(\mathbf{x})$
and $\b(\mathbf{x})$, we prove the optimality of the PHSS method.
In our terminology (see \cite{AN-1993}) this means that the PHSS iterations number for reaching
the solution within a fixed accuracy can be bounded from above by a constant
independent of the dimension $n=n(h)$. \newline
%
%
Nevertheless, the numerical experiments have been performed also in the case of unstructured
meshes, with negligible differences in the PHSS method performances.
In such cases, by coupling the PHSS method with standard  Preconditioned Conjugate Gradient (PCG) and  Preconditioned
Generalized Minimal Residual (PGMRES) method, our proposal makes only use of
matrix vector products (for sparse or even diagonal matrices) and of a solver for the related diffusion
equation with constant coefficient. \newline
The underlying idea is that whenever $P_n(\a)$ results to be
an approximate factorization of $A_n(\a,\b)$, the computational bottleneck lies in the
solution of linear systems with coefficient matrix $A_n(1,0)$. Thus, the main effort in devising efficient
algorithms must be devoted to this simpler problem. Moreover, to some extent, the approximation schemes
in the discretization should take into account this key step to give rise to a linear algebra problem
less difficult to cope with.
\par
The outline of the paper is as follows. In section \ref{sez:fem} we report a brief description of
the FE approximation of the convection-diffusion equation, while in section \ref{sez:PHSS} we
summarize the definition and the convergence properties of the Hermitian/Skew-Hermitian (HSS) method
and of its preconditioned formulation (PHSS method). Section \ref{sez:preconditioning} analyzes the
spectral properties of the matrix sequences arising from FE approximations of the considered
convection-diffusion problem and reports the preconditioner definition. Section \ref{sez:clustering} is
devoted to the theoretical analysis of the  preconditioned matrix sequence spectral properties
in the case of structured uniform meshes. In section \ref{sez:numerical_tests} several numerical experiments
illustrate  the claimed convergence properties and their extension in the case of other structured
and unstructured meshes.
Lastly,  section \ref{sez:conclusions} deals with complexity issues and perspectives.
\section{Finite Element approximation} \label{sez:fem}
Problem (\ref{eq:modello}) can be stated in variational form as follows:
\begin{equation} \label{eq:formulazione_variazionale}
\left \{
\begin{array}{l}
\textrm{find $u \in H_0^1(\Omega)$ such that} \\
\int_\Omega \left ( \a \nabla u \cdot \nabla \varphi -\b \cdot \nabla
\varphi \ u  
\right )
=\int_\Omega f \varphi  \quad \textrm{for all } \varphi \in  H_0^1(\Omega)
\end{array}
\right.
\end{equation}
where $H_0^1(\Omega)$ is the space of square integrable functions, with ${L}^2$ weak derivatives vanishing on
$\partial \Omega$. 
We assume that $\Omega$ is a polygonal domain and we make the following hypotheses on the coefficients
\begin{equation} \label{eq:ipotesi_coefficienti}
\left \{
\begin{array}{l}
\a \in {\bf C}^2(\overline \Omega),\quad \textrm{ with } \a(\mathbf{x}) \ge a_0 >0, \\
\b \in {\bf C}^1(\overline \Omega),\quad \textrm{ with } \mathrm{div} \b \ge 0 \textrm{ pointwise in } \Omega, \\
f \in {L}^2(\Omega).
\end{array}
\right.
\end{equation}
%
%
The previous assumptions guarantee existence and uniqueness for problem (\ref{eq:formulazione_variazionale}).
\newline
For the sake of simplicity, we restrict ourselves to linear finite element approximation of problem
(\ref{eq:formulazione_variazionale}). To this end, let $\mathcal{T}_h=\{K\}$ be a usual finite element partition
of $\overline \Omega$ into triangles, with $h_K=\mathrm{diam}(K)$ and $h=\max_K{h_K}$.
Let $V_h \subset H^1_0(\Omega)$ be the space of linear finite elements, i.e.
\[
V_h=\{\varphi_h : \overline \Omega \rightarrow \mathbb{R} \ \textrm{ s.t. } \varphi_h \textrm{ is continuous,  }
{\varphi_h}_{|_K} \textrm{ is linear, and } 
{\varphi_h}_{| \partial \Omega} =0
 \}.
\]
The finite element approximation of problem (\ref{eq:formulazione_variazionale}) reads:
\begin{equation} \label{eq:formulazione_variazionale_fe}
\left \{
\begin{array}{l}
\textrm{find $u_h \in V_h$ such that} \\
\int_\Omega \left (\a \nabla u_h \cdot \nabla \varphi_h -\b \cdot \nabla
\varphi_h \ u_h 
\right )
=\int_\Omega f \varphi_h \quad \textrm{for all } \varphi_h \in  V_h.
\end{array}
\right.
\end{equation}
For each internal node $i$ of the mesh $\mathcal{T}_h$, let $\varphi_i \in V_h$ be such that $\varphi_i(\textrm{node }i)=1$, and
$\varphi_i(\textrm{node }j)=0$ if $i\ne j$. Then, the collection of all $\varphi_i$'s is a base for $V_h$. We will denote
by $n(h)$ the number of the internal nodes of $\mathcal{T}_h$, which corresponds to the dimension of $V_h$. Then, we write $u_h$ as
\[
u_h=\sum_{i=1}^{n(h)}u_i \varphi_i
\]
and the variational equation (\ref{eq:formulazione_variazionale_fe}) becomes an algebraic linear system:
\begin{equation} \label{eq:modello_discreto}
\sum_{j=1}^{n(h)}\left (\int_\Omega \a \nabla \varphi_j \cdot \nabla
\varphi_i -\nabla \varphi_i \cdot \b \ \varphi_j 
\right ) u_j =\int_\Omega f \varphi_i, \quad i=1,\ldots, n(h).
\end{equation}
The aim of this paper is to study the effectiveness of the proposed Preconditioned HSS method applied to
the quoted nonsymmetric linear systems (\ref{eq:modello_discreto}),
both from the theoretical and numerical point of view.
%
\section{Preconditioned HSS method} \label{sez:PHSS}
In this section we briefly summarize the definition and the relevant properties of the
Hermitian/Skew-Hermitian (HSS) method formulated in \cite{BGN-SIMAX-2003} and of its extension in the case
of preconditioning as proposed in \cite{BGST-NM-2005}.\newline
The HSS method can be applied whenever we are looking for the solution of a linear system
$A_n \mathbf{x} = \mathbf{b}$ where $A_n\in \mathbb{C}^{n\times n}$ is a non singular matrix with a
positive definite real part  and $\mathbf{x}, \mathbf{b}$ belong to  $\mathbb{C}^{n}$.
Several applications in scientific computing lead to such kind of linear problems and, typically,
the matrix $A_n$ is also large and sparse, as in the case of FD or FE approximations of (\ref{eq:modello}).\newline
More in detail, the HSS method refers to the unique Hermitian/Skew-Hermitian splitting of the matrix $A_n$ as
\begin{equation}\label{eq:HSS_splitting}
A_n = \mathrm{Re}(A_n) + \mathrm{i}\, \mathrm{Im} (A_n), \quad
\mathrm{i}^2=-1
\end{equation}
where
\[
\mathrm{Re}(A_n)= \frac{A_n+A_n^H}{2}\quad  \mathrm{and} \quad
\mathrm{Im}(A_n)= \frac{A_n-A_n^H}{2\mathrm{i}}
\]
are Hermitian matrices by definition.\newline
In the same spirit of the ADI method \cite{D-NM-1962}, the quoted splitting allows to define
the Hermitian/Skew-Hermitian (HSS) method \cite{BGN-SIMAX-2003}, as follows \begin{equation}\label{eq:HSS}
\left\{ \begin{array}{lcl}
    \left(\alpha I+\mathrm{Re}(A_n)\right)\mathbf{x}^{k+{1\over 2}} & = &
    \left(\alpha I-\mathrm{i}\, \mathrm{Im}(A_n)\right)\mathbf{\mathbf{x}}^{k}+\mathbf{b} \\
    \left(\alpha I+\mathrm{i}\, \mathrm{Im}(A_n)\right)\mathbf{x}^{k+1} & = &
    \left(\alpha I-\mathrm{Re}(A_n)\right)\mathbf{x}^{k+{1\over 2}}+\mathbf{b}
     \end{array}
\right.
\end{equation}
with $\alpha$ positive parameter and $\mathbf{x}^0$ given initial guess. \newline
Beside the quoted formulation as a two-step iteration method, the HSS method can be reinterpreted as a
stationary iterative method whose iteration matrix is given by
\begin{equation}\label{eq:M-HSS}
\widetilde{M}(\alpha)  =  \left(\alpha I+\mathrm{i}\,
\mathrm{Im}(A_n)\right)^{-1}
      \left(\alpha I-\mathrm{Re}(A_n)\right)
\left(\alpha I+\mathrm{Re}(A_n)\right)^{-1}
      \left(\alpha I-\mathrm{i}\, \mathrm{Im}(A_n)\right)
\end{equation}
and whose convergence properties are only related to the spectral radius of the Hermitian matrix
$\left(\alpha I-\mathrm{Re}(A_n)\right) \left(\alpha I+\mathrm{Re}(A_n)\right)^{-1}$,
which is unconditionally bounded by $1$ provided the positivity of $\alpha$ and of $\mathrm{Re}(A_n)$
\cite{BGN-SIMAX-2003}.\newline
%
Indeed,  the rate of convergence can be unsatisfactory for large values of $n$ in the case of PDEs applications,
as for instance (\ref{eq:modello}), so that the preconditioned formulation of the method proposed
in \cite{BGST-NM-2005} is more profitable.\newline
Let $P_n$ be a Hermitian positive definite matrix. The Preconditioned HSS (PHSS) method
can be defined as
\begin{equation} \label{eq:PHSS}
\left\{ \begin{array}{lcl}
    \left(\alpha I+P_n^{-1}\mathrm{Re}(A_n)\right)\mathbf{x}^{k+{1\over 2}} & = &
    \left(\alpha I-P_n^{-1}\mathrm{i}\, \mathrm{Im}(A_n)\right)\mathbf{x}^{k}+P_n^{-1}\mathbf{b} \\
    \left(\alpha I+P_n^{-1}\mathrm{i}\, \mathrm{Im}(A_n)\right)\mathbf{x}^{k+1} & = &
    \left(\alpha I-P_n^{-1}\mathrm{Re}(A_n)\right)\mathbf{x}^{k+{1\over 2}}+P_n^{-1}\mathbf{b}
     \end{array}
\right.
\end{equation}
Notice that the proposed method differs from the HSS method applied to the matrix $P_n^{-1}A_n$
since $P_n^{-1}\mathrm{Re}(A_n)$ and $P_n^{-1}\mathrm{Im}(A_n)$ are not the
Hermitian/Skew-Hermitian splitting of $P_n^{-1}A_n$.\newline
%
%
Let $\lambda(X)$ denote the set of the eigenvalues of a square matrix $X$;
the convergence properties of the PHSS method exactly mimic those of the previous
HSS method, as claimed in the theorem below. \newline
\begin{theorem} \emph{\cite{BGST-NM-2005}} \label{th:main}
Let $A_n\in \mathbb{C}^{n\times n}$ be a matrix with positive definite real part,
$\alpha$ be a positive parameter and let $P_n\in \mathbb{C}^{n\times n}$ be
a Hermitian positive definite matrix.
Then the iteration matrix of the PHSS method is given by
\begin{eqnarray*}
M(\alpha) & = & \left(\alpha I+\mathrm{i}\,
P_n^{-1}\mathrm{Im}(A_n)\right)^{-1}
      \left(\alpha I-P_n^{-1}\mathrm{Re}(A_n)\right) \left(\alpha I+P_n^{-1}\mathrm{Re}(A_n)\right)^{-1} \\
      & &
      \left(\alpha I-\mathrm{i}\, P_n^{-1}\mathrm{Im}(A_n)\right),
\end{eqnarray*}
its spectral radius $\varrho(M(\alpha))$ is bounded by
\[
\sigma(\alpha)=\max_{\lambda_i\in \lambda(P_n^{-1}{\rm Re}(A_n))}
\left|{\alpha-\lambda_i \over \alpha+\lambda_i}\right| <1 \quad
\textrm{for any} \quad \alpha>0,
\]
i.e., the PHSS iteration is unconditionally convergent to the unique solution
of the system $A_n\mathbf{x}=\mathbf{b}$. Moreover, denoting by
$\kappa={\lambda_{\max}(P_n^{-1}\mathrm{Re}(A_n))}/{\lambda_{\min}(P_n^{-1}\mathrm{Re}(A_n))}$
the spectral condition number (namely the Euclidean (spectral) condition number of the
symmetrized matrix), the optimal $\alpha$ value that minimizes the quantity $\sigma(\alpha)$ equals
\[
\alpha^*=\sqrt{\lambda_{\min}(P_n^{-1}\mathrm{Re}(A_n))\lambda_{\max}(P_n^{-1}\mathrm{Re}(A_n))}
\quad \mathrm{and}\quad
\sigma(\alpha^*)={\sqrt{\kappa}-1\over \sqrt{\kappa}+1}.
\]
\end{theorem}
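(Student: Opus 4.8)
The plan is to follow the standard ADI-type analysis from \cite{BGN-SIMAX-2003}, adapting it to the preconditioned setting. First I would rewrite each of the two half-steps in \eqref{eq:PHSS} so as to identify the error-propagation operator. Subtracting the fixed-point identity (which the exact solution $\mathbf{x}^*$ satisfies) from each half-step, the error $\mathbf{e}^k = \mathbf{x}^k - \mathbf{x}^*$ evolves as
\[
\bigl(\alpha I + P_n^{-1}\mathrm{Re}(A_n)\bigr)\mathbf{e}^{k+1/2} = \bigl(\alpha I - \mathrm{i}\,P_n^{-1}\mathrm{Im}(A_n)\bigr)\mathbf{e}^k,
\]
\[
\bigl(\alpha I + \mathrm{i}\,P_n^{-1}\mathrm{Im}(A_n)\bigr)\mathbf{e}^{k+1} = \bigl(\alpha I - P_n^{-1}\mathrm{Re}(A_n)\bigr)\mathbf{e}^{k+1/2}.
\]
Eliminating $\mathbf{e}^{k+1/2}$ gives $\mathbf{e}^{k+1} = M(\alpha)\mathbf{e}^k$ with $M(\alpha)$ exactly as stated, which establishes the formula for the iteration matrix.

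Next I would bound the spectral radius. The key trick is that $M(\alpha)$ is similar to the matrix obtained by moving the trailing factor $\bigl(\alpha I - \mathrm{i}\,P_n^{-1}\mathrm{Im}(A_n)\bigr)$ to the front; more precisely, $\varrho(M(\alpha)) = \varrho\bigl(\widehat M(\alpha)\bigr)$ where
\[
\widehat M(\alpha) = \bigl(\alpha I - P_n^{-1}\mathrm{Re}(A_n)\bigr)\bigl(\alpha I + P_n^{-1}\mathrm{Re}(A_n)\bigr)^{-1}\bigl(\alpha I - \mathrm{i}\,P_n^{-1}\mathrm{Im}(A_n)\bigr)\bigl(\alpha I + \mathrm{i}\,P_n^{-1}\mathrm{Im}(A_n)\bigr)^{-1}.
\]
The point of the preconditioner being Hermitian positive definite is that $P_n^{-1}\mathrm{Re}(A_n)$ and $P_n^{-1}\mathrm{Im}(A_n)$ are each similar (via $P_n^{1/2}$) to the Hermitian matrices $P_n^{-1/2}\mathrm{Re}(A_n)P_n^{-1/2}$ and $P_n^{-1/2}\mathrm{Im}(A_n)P_n^{-1/2}$; the first is positive definite. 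Conjugating $\widehat M(\alpha)$ by $P_n^{1/2}$ and then taking the Euclidean norm, the second pair of factors is a Cayley transform of a Hermitian matrix and hence has spectral norm at most $1$, while the first pair is $(\alpha I - B)(\alpha I + B)^{-1}$ with $B = P_n^{-1/2}\mathrm{Re}(A_n)P_n^{-1/2}$ Hermitian positive definite, whose norm is $\max_i |\alpha - \mu_i|/(\alpha + \mu_i)$ over the eigenvalues $\mu_i$ of $B$ — and these eigenvalues coincide with $\lambda(P_n^{-1}\mathrm{Re}(A_n))$. Submultiplicativity then yields $\varrho(M(\alpha)) \le \sigma(\alpha)$, and since each factor $|\alpha - \mu_i|/(\alpha + \mu_i) < 1$ for $\alpha, \mu_i > 0$, we get $\sigma(\alpha) < 1$, i.e. unconditional convergence.

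Finally, for the optimal parameter I would treat $g(\alpha) = \max_i |\alpha - \mu_i|/(\alpha + \mu_i)$ as a function of $\alpha > 0$ over the interval $[\mu_{\min}, \mu_{\max}]$ with $\mu_{\min} = \lambda_{\min}(P_n^{-1}\mathrm{Re}(A_n))$, $\mu_{\max} = \lambda_{\max}(P_n^{-1}\mathrm{Re}(A_n))$. Since $\mu \mapsto |\alpha - \mu|/(\alpha + \mu)$ is monotone on either side of $\mu = \alpha$, the maximum over the spectrum is attained at one of the two endpoints, so $g(\alpha) = \max\{(\alpha - \mu_{\min})/(\alpha + \mu_{\min}),\,(\mu_{\max} - \alpha)/(\mu_{\max} + \alpha)\}$ — the first term increasing in $\alpha$, the second decreasing. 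The minimum of the max occurs where the two are equal; solving $(\alpha - \mu_{\min})/(\alpha + \mu_{\min}) = (\mu_{\max} - \alpha)/(\mu_{\max} + \alpha)$ gives $\alpha^* = \sqrt{\mu_{\min}\mu_{\max}}$, and substituting back produces $\sigma(\alpha^*) = (\sqrt{\kappa} - 1)/(\sqrt{\kappa} + 1)$ with $\kappa = \mu_{\max}/\mu_{\min}$. The only mildly delicate point is the first step — verifying that the Cayley-transform factor built from $\mathrm{Im}(A_n)$ genuinely has norm $\le 1$ after the $P_n^{1/2}$ similarity, which is where Hermitian positive definiteness of $P_n$ is essential; everything else is routine Cayley-transform bookkeeping and a one-variable minimization.
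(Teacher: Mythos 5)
Your argument is correct and is essentially the standard proof from the cited references \cite{BGN-SIMAX-2003,BGST-NM-2005}: the paper itself states this theorem only with a citation and gives no proof, so there is nothing to diverge from. The error-recursion derivation of $M(\alpha)$, the similarity (cyclic permutation of factors followed by conjugation with $P_n^{1/2}$) reducing the bound to $\|(\alpha I-B)(\alpha I+B)^{-1}\|_2$ times the norm of a unitary Cayley transform of the skew-Hermitian part, and the equioscillation argument for $\alpha^*$ are all sound and complete.
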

Thus, the unconditional convergence property holds also in the preconditioned formulation
of the method and the convergence properties are related to the spectral radius of
\begin{equation*}
\left(\alpha I-P_n^{-1/2}\mathrm{Re}(A_n)P_n^{-1/2}\right)
   \left(\alpha I+P_n^{-1/2}\mathrm{Re}(A_n)P_n^{-1/2}\right)^{-1},
\end{equation*}
where the optimal parameter $\alpha$ is the square root of the product
of the extreme eigenvalues of $P_n^{-1}\mathrm{Re}(A_n)$.\newline
It is worth stressing that the PHSS method in (\ref{eq:PHSS}) can also be interpreted as
the original iteration (\ref{eq:HSS}) where the identity matrix is replaced by the
preconditioner $P_n$, i.e.,
\begin{equation} \label{eq:PHSS-P}
\left\{ \begin{array}{lcl}
    \left(\alpha P_n+\mathrm{Re}(A_n)\right)\mathbf{x}^{k+{1\over 2}} & = &
    \left(\alpha P_n-\mathrm{i}\ \mathrm{Im}(A_n)\right)\mathbf{x}^{k} +\mathbf{b} \\
    \left(\alpha P_n+\mathrm{i}\ \mathrm{Im}(A_n)\right)\mathbf{x}^{k+1} & = &
    \left(\alpha P_n-\mathrm{Re}(A_n)\right)\mathbf{x}^{k+{1\over 2}}+\mathbf{b}
     \end{array}
\right.
\end{equation}
Clearly, the last formulation is the most interesting from a practical point of view,
since it does not involve any inverse matrix and it easily allows to define an inexact formulation of
the quoted method: in principle, at each iteration, the PHSS method requires the exact solutions
with respect to the large matrices $\alpha P_n+\mathrm{Re}(A_n)$ and $\alpha P_n+\mathrm{i}\,
\mathrm{Im}(A_n)$. This requirement is impossible to achieve in practice and an inexact outer
iteration is computed by applying a Preconditioned Conjugate Gradient method (PCG) and a Preconditioned
Generalized Minimal Residual method (PGMRES), with preconditioner $P_n$, to the coefficient matrices
$\alpha P_n+\mathrm{Re}(A_n)$ and $\alpha P_n+\textrm{i}\ \mathrm{Im}(A_n)$, respectively.
Hereafter, we denote by IPHSS method the described inexact PHSS iterations. \newline The accuracy for
the stopping criterion of these additional inner iterative procedures must be chosen by taking
into account the accuracy obtained by the current step of the outer iteration (see \cite{BGN-SIMAX-2003,BGST-NM-2005}
and section \ref{sez:numerical_tests} for some remarks about this topic).
The most remarkable fact is that the PHSS and IPHSS methods show the same convergence properties,
though the computational cost of the latter is substantially reduced with respect to the former.\newline
In the IPHSS perspective the spectral properties induced by the preconditioner $P_n$ are relevant
not only with respect to the IPHSS rate of convergence, i.e., the outer iteration, but also with respect
to the PCG and PGMRES ones. So, the spectral analysis of the matrix sequence $\{P_n^{-1}\mathrm{Im} (A_n)\}$
becomes relevant exactly as the spectral analysis of the matrix sequence $\{P_n^{-1} \mathrm{Re} (A_n)\}$.\newline
Lastly, it is worth stressing that a deeper insight of the HSS/PHSS convergence properties
with respect to the skew-Hermitian part pertains to the framework of multi-iterative methods
\cite{S-CAM-1993}, among which multigrid methods represent a classical example.
Typically, a multi-iterative method is composed by two, or more, different iterative techniques, where
each one is cheap and potentially slow convergent. Nevertheless, these iterations have a
complementary spectral behavior, so that their composition becomes fast convergent. In fact,
the PHSS matrix iteration can be reinterpreted as the composition of two distinct iteration
matrices and the strong complementarity of these two components makes the contraction factor
of the whole procedure much smaller than the contraction factors of the two distinct components.
In particular, the skew-Hermitian contributions in the iteration matrix can have a role in accelerating
the convergence. The larger is $\b(\mathbf{x})$, i.e., the problem is convection dominated, the
more the FE matrix $A_n$ departs from normality. Thus, a stronger ``mixing up effect'' is
observed and the real convergence behavior of the method is much faster compared with
the forecasts of Theorem \ref{th:main}. See \cite{BGST-NM-2005} for further details.\newline
\section{Preconditioning strategy} \label{sez:preconditioning}
%
According to the notations and definitions in Section \ref{sez:fem}, 
the algebraic equations in (\ref{eq:modello_discreto}) can be rewritten in
matrix form as the linear system
\begin{equation*}
A_n(\a,\b) \mathbf{x} = \mathbf{b},
\end{equation*}
with
\begin{equation} \label{eq:def_A}
A_n(\a,\b)=\Theta_n(\a)+\Psi_n(\b)\in \mathbb{R}^{n\times n}, \
n=n(h),
\end{equation}
%
%
where $\Theta_n(\a)$ and $\Psi_n(\b)$ represent the approximation of the diffusive
term and approximation of the convective term, respectively. More precisely, we have
\begin{eqnarray}
(\Theta_n(\a))_{i,j} &=& \int_\Omega \a \nabla \varphi_i \nabla
\varphi_j \label{eq:def_theta}\\
(\Psi_n(\b))_{i,j}&=& -\int_\Omega (\nabla \varphi_i \cdot \b) \
\varphi_j, \label{eq:def_psi}
\end{eqnarray}
where suitable quadrature formula are considered in the case of non constant coefficient functions
$\a$ and $\b$.\newline
Thus, according to (\ref{eq:HSS_splitting}), the Hermitian/skew-Hermitian decomposition
of $A_n(\a,\b)$ is given by
\begin{eqnarray}
\mathrm{Re}(A_n(\a,\b)) &=& \Theta_n(\a) +
\mathrm{Re}(\Psi_n(\b)), \label{eq:def_ReA}\\
\mathrm{i}\,  \mathrm{Im}(A_n(\a,\b)) &=& \mathrm{i}\,
\mathrm{Im}(\Psi_n(\b)), \label{eq:def_ImA}
\end{eqnarray}
where
\begin{eqnarray*}
\mathrm{Re}(\Psi_n(\b)) &=&
\frac{1}{2}(\Psi_n(\b)+\Psi_n^T(\b))=E_n(\b),\\
\mathrm{i}\,  \mathrm{Im}(\Psi_n(\b)) &=& \Psi_n(\b)-E_n(\b),
\end{eqnarray*}
since by definition, the diffusion therm $\Theta_n(\a)$ is a Hermitian matrix and
does not contribute to the skew-Hermitian part of $A_n(\a,\b)$.
Notice also that $E_n(\b)=0$ if $\mathrm{div}(\b)=0$.\newline
Clearly, the quoted HSS decomposition can be performed on any single elementary matrix
related to $\mathcal{T}_h$ by considering the standard assembling procedure.\newline
By construction, the matrix $\mathrm{Re}(A_n(\a,\b))$ is symmetric and positive definite
whenever $\lambda_{\min}(\Theta_n(\a)) \ge \rho(E_n(\b))$. Indeed, without the condition
$\mathrm{div}(\b)\ge 0$, the matrix $E_n(\b)$ does not have a definite sign.\newline
Moreover, the Lemma below allows to obtain further information regarding such a structural
assumption.\newline
\begin{lemma}\label{lemma:normaE}
Let $\{E_n(\b)\}$ be the matrix sequence defined as
\[
E_n(\b)=\frac{1}{2}(\Psi_n(\b)+\Psi_n^T(\b)).
\]
%
%
Under the assumptions in \emph{(\ref{eq:ipotesi_coefficienti})}, then it holds
\[
\|E_n(\b)\|_2  \le \|E_n(\b)\|_\infty \le Ch^2,
\]
with $C$ absolute positive constant only depending on $\b(\mathbf{x})$ and $\Omega$.
%
%
%
\end{lemma}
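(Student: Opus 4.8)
The plan is to estimate the entries of $E_n(\b)$ directly and then bound $\|E_n(\b)\|_\infty$ by the maximum absolute row sum. Write $E_n(\b)_{i,j} = \frac12(\Psi_n(\b)_{i,j} + \Psi_n(\b)_{j,i})$, which by (\ref{eq:def_psi}) equals $-\frac12\int_\Omega \left( (\nabla\varphi_i\cdot\b)\varphi_j + (\nabla\varphi_j\cdot\b)\varphi_i\right)$. The key observation is that the integrand here is exactly $-\frac12\,\b\cdot\nabla(\varphi_i\varphi_j)$, since $\nabla(\varphi_i\varphi_j)=\varphi_j\nabla\varphi_i+\varphi_i\nabla\varphi_j$. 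Hence, integrating by parts on $\Omega$ (the boundary term vanishes because $\varphi_i,\varphi_j\in H_0^1(\Omega)$),
\[
E_n(\b)_{i,j} = \frac12\int_\Omega \mathrm{div}(\b)\,\varphi_i\varphi_j.
\]
This is the structural identity that makes the whole estimate work, and it also re-explains the remark in the text that $E_n(\b)=0$ when $\mathrm{div}(\b)=0$.

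Next I would use the support and size properties of the linear nodal basis functions. Each $\varphi_i$ is supported on the patch $\omega_i$ of triangles meeting node $i$, satisfies $0\le\varphi_i\le 1$, and $\mathrm{meas}(\omega_i)\le C_1 h^2$ for a mesh-geometry constant (here I use that the partition is quasi-uniform with mesh size $h$, as assumed in Section \ref{sez:fem}). Since $\mathrm{div}\,\b$ is continuous on the compact set $\overline\Omega$, it is bounded: $\|\mathrm{div}\,\b\|_{L^\infty(\Omega)}\le C_2$ depending only on $\b$. Therefore
\[
|E_n(\b)_{i,j}| \le \frac12\,C_2\int_{\omega_i\cap\omega_j}\varphi_i\varphi_j \le \frac12\,C_2\,\mathrm{meas}(\omega_i) \le \tfrac12 C_1 C_2\, h^2,
\]
and moreover $E_n(\b)_{i,j}=0$ unless nodes $i$ and $j$ are neighbours, so each row of $E_n(\b)$ has at most $C_3$ nonzero entries, again with $C_3$ depending only on the mesh geometry (bounded valence). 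Summing over the row,
\[
\|E_n(\b)\|_\infty = \max_i \sum_j |E_n(\b)_{i,j}| \le C_3\cdot \tfrac12 C_1 C_2\, h^2 =: C h^2,
\]
with $C$ absolute and depending only on $\b(\mathbf{x})$ and $\Omega$. The bound $\|E_n(\b)\|_2\le\|E_n(\b)\|_\infty$ is the standard inequality $\|X\|_2\le\sqrt{\|X\|_1\|X\|_\infty}=\|X\|_\infty$ for a symmetric matrix $X$, which finishes the proof.

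The only genuinely delicate point is the integration-by-parts identity together with the vanishing of the boundary term; once that is in hand, everything reduces to elementary estimates on $P_1$ basis functions. A minor subtlety I would flag: when non-constant coefficients force the use of a quadrature rule in (\ref{eq:def_psi}), the identity $E_n(\b)_{i,j}=\frac12\int_\Omega\mathrm{div}(\b)\varphi_i\varphi_j$ holds only up to a quadrature error; for the statement as written one either assumes the integrals are computed exactly, or notes that a consistent quadrature of sufficient order introduces an error of the same $O(h^2)$ order on each patch, so the conclusion $\|E_n(\b)\|_\infty\le Ch^2$ is unaffected. I would mention this in a sentence rather than carry the quadrature bookkeeping through the argument.
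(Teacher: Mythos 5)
Your proposal is correct and follows essentially the same route as the paper: the Green formula (integration by parts with vanishing boundary term) to rewrite $(E_n(\b))_{i,j}$ as $\tfrac12\int_\Omega \mathrm{div}(\b)\,\varphi_i\varphi_j$, an entrywise bound via $\sup_\Omega|\mathrm{div}\,\b|$, $|\varphi_k|\le 1$ and the $O(h^2)$ measure of the overlapping supports, a row-sum bound using the bounded number of nonzeros per row, and the inequality $\|X\|_2\le\|X\|_\infty$ for symmetric $X$. Your closing comment on quadrature matches the paper's separate remark following the lemma.
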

\begin{proof}
By applying the Green formula, it holds  that for any $i,j=1, \ldots, n(h)$
\begin{eqnarray*}
(E_n(\b))_{i,j}&=& -\frac{1}{2}\int_\Omega \left ( (\nabla
\varphi_i \cdot \b)\varphi_j +  (\nabla \varphi_j \cdot
\b)\varphi_i \right) = -\frac{1}{2} \int_\Omega \b \cdot \nabla
(\varphi_i \varphi_j)
\\
&=&\frac{1}{2} \int_\Omega  \mathrm{div}(\b) \cdot
\varphi_i \varphi_j \\
&=& \frac{1}{2} \sum_{K\subseteq S(\varphi_i) \cap S(\varphi_j) }
\int_{K} \mathrm{div}(\b)
\varphi_i \varphi_j, \\
\end{eqnarray*}
with respect to the related mesh $\mathcal{T}_h=\{K\}$ and where
$S(\varphi_k)$ denotes the support of basis element $\varphi_k$ on $\mathcal{T}_h$. Thus, we have
\[
|E_n(\b)|_{i,j}\le \frac{1}{2} \sup_{\Omega} \left| \mathrm{div}
(\b)\right| \sum_{K\subseteq S(\varphi_i) \cap S(\varphi_j) }
\int_{K} |\varphi_i \varphi_j|
\le \frac{1}{4} \sup_{\Omega} \left| \mathrm{div} (\b)\right| qh^2
\]
since $|\varphi_k| \le 1$ for any $k$ and
\[
\sum_{K \subseteq S(\varphi_i) \cap S(\varphi_j) } \int_{K}
|\varphi_i \varphi_j| \le \frac{h^2}{2} \# \{K \in \mathcal{T}_h |
K\subseteq S(\varphi_i) \cap S(\varphi_j)\}\le \frac{q h^2}{2},
\]
where $h$ is the finesse parameter of the mesh $\mathcal{T}_h$
and $q$ equals the maximum number of involved mesh elements with respect to the mesh sequence
$\{\mathcal{T}_h\}$.\newline
Lastly, since $E_n(\b)$ is a Hermitian matrix, it holds
\[
\|E_n(\b)\|_2  \le \|E_n(\b)\|_\infty \le D
\max_{i,j=1,\ldots,n(h)} |E_n(\b)|_{i,j} \le \frac{1}{4}
D\sup_{\Omega} | \mathrm{div} (\b)| qh^2,
\]
where $D$ denotes the maximum number of nonzero entries on the rows of $E_n(\b)$.
%
%
\end{proof}
\ \newline
\begin{remark} 
The claim of Lemma \emph{\ref{lemma:normaE}} holds whenever a quadrature formula
with error $O(h^2)$ is consider for approximating the integrals involved in \emph{(\ref{eq:def_psi})}. \newline
\end{remark}
\ \newline
Moreover, in the special case of a structured uniform mesh
on $\Omega=(0,1)^2$ as considered in the next section, under the assumptions of Lemma \ref{lemma:normaE}
and $\a(\mathbf{x})\ge \a_0 >0$, we have $\Theta_n(\a) \ge c h^2 I_n$ (with $c$ absolute positive
constant), so that $\mathrm{Re}(A_n(\a,\b)) \ge (c-C)h^2 I_n$.
Here, we are referring to the standard ordering relation between Hermitian matrices,
i.e., the notation $X \ge Y$, with $X$ and $Y$ Hermitian matrices, means that $X - Y$
is nonnegative definite.
Thus, under the assumption that $|\mathrm{div}(\b)|$ is smaller than a positive suitable
constant, it holds that $\mathrm{Re}(A_n(\a,\b))$ is real, symmetric, and positive definite. \newline
However, the main drawback is due to ill-conditioning, since the condition number is asymptotic
to $h^{-2}$, so that preconditioning is highly recommended.\newline
By referring to a preconditioning strategy previously analyzed in the case of FD
approximations \cite{S-NM-1999,ST-LAA-1999,ST-ETNA-2000,ST-ETNA-2003,ST-SIMAX-2003}
or FE approximations \cite{ST-NA-2001} of the diffusion equation and
recently applied to FD approximations \cite{BGST-NM-2005} of (\ref{eq:modello}),
we consider the preconditioning matrix sequence $\{P_n(\a)\}$  defined as
\begin{equation} \label{eq:def_P}
P_n(\a)=D_n^{\frac{1}{2}}(\a) A_n(1,0) D_n^{\frac{1}{2}}(\a)
\end{equation}
where
$D_n(\a)=\mathrm{diag}(A_n(\a,0))\mathrm{diag}^{-1}(A_n(1,0))
$, i.e., the suitable scaled main diagonal of
$A_n(\a,0)$ and clearly $A_n(\a,0)$ equals $\Theta_n(\a)$.\newline
In such a way, the solution of the linear system $A_n(\a,\b)\mathbf{u}=\mathbf{f}$ is reduced to computations
involving diagonals and the matrix $A_n(1,0)$. In the case of uniform structured mesh this task
can be efficiently performed by considering fast Poisson solvers,
such as those based on the cyclic reduction idea (see e.e.
\cite{BDGG-SINUM-1971,D-SIAMRev-1970,S-SIAMRev-1977}) and several specialized multigrid methods
(see e.g. \cite{H-Springer-1985,S-NM-2002}). \newline
It is worth stressing that the preconditioner is tuned only with respect to the diffusion matrix $\Theta_n(\a)$
owing to the PHSS convergence properties highlighted in section \ref{sez:PHSS}. Indeed, the
PHSS shows a convergence behavior mainly depending on the spectral properties of the matrix
$P_n^{-1}(\a)\mathrm{Re}(A_n(\a,\b))$. Nevertheless, the skew-Hermitian contribution may play a role
in speeding up considerably the convergence (see \cite{BGST-NM-2005} for further details). \newline
Hereafter, we denote by $\{A_n(\a,\b)\}$, $n=n(h)$ the matrix sequence associated to a family of
mesh $\{T_h\}$, with decreasing finesse parameter $h$.
As customary, the whole preconditioning analysis will refer to a matrix sequence instead to
a single matrix, since the goal is to quantify the difficult of the linear system resolution in
relation to the accuracy of the chosen approximation scheme.
\section{Spectral analysis and clustering properties in the case of structured uniform meshes} \label{sez:clustering}
In the present section we analyze the spectral properties of the preconditioned matrix sequences
\[
\{P_n^{-1}(\a)\mathrm{Re}(A_n(\a,\b))\} \ \mathrm{and}\
\{P_n^{-1}(\a)\mathrm{Im}(A_n(\a,\b))\}
\]
in the special case of $\Omega=(0,1)^2$ with a structured uniform mesh as in Figure
\ref{fig:mesh_strutturata_uniforme}, by using the spectral tools derived from Toeplitz theory
\cite{BG-LAA-1998,BS-Springer-1998,S-LAA-1998,S-LAA-1998-2}. The aim is to prove the optimality of the
PHSS method, i.e., the PHSS iterations number for reaching the solution within a fixed accuracy can be
bounded from above by a constant independent of the dimension $n=n(h)$. A more in depth analysis is also
considered for foreseeing the IPHSS method convergence behavior.\newline
\begin{figure}
\centering
\epsfig{file=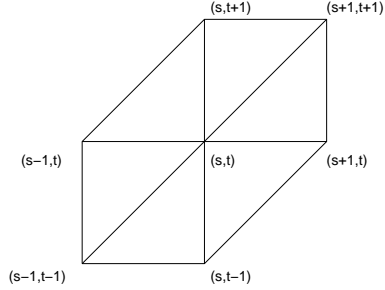,height=6cm}
\vskip -1.5cm
\caption{Uniform structured mesh giving rise to a Toeplitz matrix in the constant coefficient case $\a(\mathbf{x})=1$.}
\label{fig:mesh_strutturata_uniforme}
\end{figure}
We make reference to the following definition.\newline
\begin{definition} \emph{\cite{T-LAA-1996}} \label{def:cluster}
Let $\{A_n\}$ be a sequence of matrices of increasing dimensions
$n$ and let $g$ be a measurable function defined over a set $K$ of
finite and positive Lebesgue measure. The sequence $\{A_n\}$ is distributed as
the measurable function $g$ in the sense of the eigenvalues, i.e.,
$\{A_n\}\sim_\lambda g$ if, for every $F$ continuous, real valued
and with bounded support, we have
\begin{equation*} 
\lim_{n\to\infty} \frac 1 {n}\sum_{j=1}^{n}
F\left(\lambda_j\bigl(A_n\bigr)\right)=
 {1\over m\{K\}}\int_{K} F(\theta(s))\, ds,
\end{equation*}
where $\lambda_j(A_n)$, $j=1,\ldots,n$, denote the eigenvalues
of $A_n$.\newline
The sequence $\{A_n\}$ is clustered at $p$ if it is distributed as
the constant function $g(x)\equiv p$, i.e., for any $\varepsilon
>0$,
$ 
\# \left\{i \left.\right|\lambda_i(A_n) \notin (p - \varepsilon, p
+ \varepsilon)\right\}=o(n)$.
%
%
The sequence $\{A_n\}$ is properly (or strongly) clustered at $p$
if for any $\varepsilon> 0$ the number of the eigenvalues of $A_n$
not belonging to $(p - \varepsilon, p + \varepsilon)$ can be
bounded by a pure constant eventually depending on $\varepsilon$,
but not on $n$.
\end{definition}
%
%
\ \newline \newline
First, we analyze the spectral properties of the matrix sequence $\{P_n^{-1}(\a)\mathrm{Re}(A_n(\a,\b))\}$
that directly influences the convergence behavior of the PHSS method. We refer
to a preliminary result in the case in which the convection term is not present. \newline
\begin{theorem}\emph{\cite{ST-NA-2001}} \label{teo:clu+se_b0}
%
%
Let $\{A_n(\a,\mathbf{0})\}$ and $\{P_n(\a)\}$ be the Hermitian
positive definite matrix sequences defined according to
\emph{(\ref{eq:def_A})} and \emph{(\ref{eq:def_P})}. If the
coefficient $\a(\mathbf{x})$ is strictly positive and belongs to
${\bf C}^2(\overline \Omega)$, then
the sequence $\{P_n^{-1}(\a) A_n(\a,\mathbf{0})\}$ is properly
clustered at $1$.
Moreover, for any $n$ all the eigenvalues of $P_n^{-1}(\a) A_n(\a,\mathbf{0})$
belong to an interval $[d,D]$ well separated from zero {\em
[Spectral equivalence property]}.
\ \newline
\end{theorem}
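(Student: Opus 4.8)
The plan is to read everything off the single error matrix $R_n := \Theta_n(\a) - P_n(\a)$, recalling that $A_n(\a,\mathbf{0})$ is exactly $\Theta_n(\a)$. Since $P_n(\a)$ is Hermitian positive definite, $P_n^{-1}(\a)\Theta_n(\a)$ is similar to the Hermitian matrix $I_n + P_n^{-1/2}(\a)\,R_n\,P_n^{-1/2}(\a)$, so the statement splits into two parts that can be handled independently: \emph{(a)} the \emph{spectral equivalence} of $\Theta_n(\a)$ and $P_n(\a)$ to $A_n(1,0)$, which already confines every eigenvalue to a fixed interval $[d,D]\subset(0,+\infty)$; and \emph{(b)} the \emph{proper cluster at $0$} of $\{P_n^{-1/2}(\a)\,R_n\,P_n^{-1/2}(\a)\}$, from which the proper cluster at $1$ follows.

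For (a) I would compare quadratic forms. With $v_h=\sum_i v_i\varphi_i$ one has $v^T\Theta_n(\a)v=\int_\Omega\a\,|\nabla v_h|^2$, so $a_0\le\a\le\|\a\|_\infty$ gives $a_0A_n(1,0)\le\Theta_n(\a)\le\|\a\|_\infty A_n(1,0)$ in the Loewner ordering. Similarly $P_n(\a)=D_n^{1/2}(\a)A_n(1,0)D_n^{1/2}(\a)$, where $(D_n(\a))_{ii}=(\Theta_n(\a))_{ii}/(A_n(1,0))_{ii}$ is a weighted average of $\a$ over $S(\varphi_i)$, hence again lies in $[a_0,\|\a\|_\infty]$; so $a_0A_n(1,0)\le P_n(\a)\le\|\a\|_\infty A_n(1,0)$. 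Chaining these yields $d\,P_n(\a)\le\Theta_n(\a)\le D\,P_n(\a)$ with $d,D$ depending only on $a_0$ and $\|\a\|_\infty$: this is the spectral equivalence, and with it the well-separation from zero.

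The heart is (b). By construction the diagonals of $\Theta_n(\a)$ and $P_n(\a)$ coincide, so $R_n$ has zero diagonal, while for neighbouring $(i,j)$ one has $(R_n)_{ij}=\int_\Omega(\a-\sqrt{(D_n)_{ii}(D_n)_{jj}})\,\nabla\varphi_i\cdot\nabla\varphi_j$. Here $\a\in{\bf C}^2(\overline\Omega)$ enters decisively: Taylor-expanding $\a$ at the midpoint $\mathbf{m}_{ij}$ of the edge $ij$, the constant term matches $\sqrt{(D_n)_{ii}(D_n)_{jj}}$ up to $O(h^2)$, and the linear term integrates to zero because on the uniform mesh the union of the two triangles sharing $ij$ is centrally symmetric about $\mathbf{m}_{ij}$ while $\nabla\varphi_i\cdot\nabla\varphi_j$ is elementwise constant and takes the same value on both of them; hence $\|R_n\|_2\le\|R_n\|_\infty\le Ch^2$. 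Now $A_n(1,0)=T_n(f)$ is the two-level Toeplitz matrix generated by the nonnegative trigonometric polynomial $f$ of the constant-coefficient stencil, which vanishes only at the origin and there with a zero of order two. Feeding the $O(h^2)$ bound into the Toeplitz localization / approximating-classes-of-sequences machinery of \cite{BG-LAA-1998,S-LAA-1998,T-LAA-1996,ST-NA-2001}, one finds that $\{P_n^{-1/2}(\a)\,R_n\,P_n^{-1/2}(\a)\}$ is negligible away from the lowest frequencies: only the $O(\varepsilon^{-1})$ Fourier modes with $|\theta|\lesssim h\,\varepsilon^{-1/2}$ can give an eigenvalue of modulus $\ge\varepsilon$, a count independent of $n$, which is the proper cluster at $0$. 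Equivalently --- and this is the path of \cite{ST-NA-2001} --- one first proves the cluster for $\a$ a strictly positive trigonometric polynomial from the estimates above, and then drops that restriction by uniform approximation $\|\a-\a_\varepsilon\|_\infty\le\varepsilon$, exploiting the stability of proper clustering under approximating classes of sequences.

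I expect the obstacle to be exactly (b), and in particular the temptation to prove it by a norm bound must be resisted: $\|P_n^{-1}(\a)\|_2\sim h^{-2}$ while $\|R_n\|_2\sim h^2$, so $\|P_n^{-1}(\a)R_n\|_2=O(1)$ does \emph{not} vanish --- the outliers in $[d,1)\cup(1,D]$ are genuine, and clustering at $1$ is a statement about the eigenvalue \emph{distribution} that only the Toeplitz localization can supply. It is also here that ${\bf C}^2$ regularity is essential: it is what upgrades the entrywise estimate from $O(h)$ to $O(h^2)$, and that upgrade is precisely what turns an $o(n)$ (weak) cluster into a proper one. The remaining technical burden --- the Dirichlet truncation and the rigorous ``local Toeplitz'' reductions for genuinely variable coefficients --- is what one imports from \cite{ST-NA-2001}.
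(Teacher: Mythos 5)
Your part (b) is essentially the route of the paper (which imports this theorem from \cite{ST-NA-2001} but displays the technique in the proof of Theorem \ref{teo:clu+se_ReA}): reduce by diagonal similarity to a perturbation of the Toeplitz matrix $\Theta_n(1)=A_n(1,0)$, establish that the perturbation is $O(h^2)$ in norm (this is exactly the content of the expansion $\Theta_n^*(\a)=\Theta_n(1)+h^2F_n(\a)+o(h^2)G_n(\a)$, Theorem 7 of \cite{ST-NA-2001}, i.e.\ your Taylor-plus-symmetry computation for $R_n$), project onto the eigenvectors of $\Theta_n(1)$ with $\lambda_i\ge\lceil\varepsilon^{-1}\rceil h^2$ --- whose complement has cardinality $O(\varepsilon^{-1})$ independent of $n$ --- and conclude by Cauchy interlacing. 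Your remarks that a pure norm bound cannot work and that ${\bf C}^2$ regularity is what upgrades the entrywise estimate to $O(h^2)$, hence a proper rather than weak cluster, are both exactly right.

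The gap is in part (a). The step $a_0A_n(1,0)\le P_n(\a)\le\|\a\|_\infty A_n(1,0)$ does not follow from $(D_n)_{ii}\in[a_0,\|\a\|_\infty]$: congruence by a diagonal matrix is not monotone for the Loewner order. For instance, with $A=\left[\begin{smallmatrix}1&1\\1&1\end{smallmatrix}\right]$ and $D=\mathrm{diag}(1,4)$ one gets $D^{1/2}AD^{1/2}-A=\left[\begin{smallmatrix}0&1\\1&3\end{smallmatrix}\right]$, which is indefinite, so $D^{1/2}AD^{1/2}\ge\lambda_{\min}(D)\,A$ fails even though every entry of $D$ is at least $1$ (and the failure persists for positive definite perturbations of $A$). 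Ostrowski's congruence theorem does give $\lambda_k(P_n(\a))\in[a_0\lambda_k(A_n(1,0)),\ \|\a\|_\infty\lambda_k(A_n(1,0))]$ eigenvalue by eigenvalue, but that does not control the Rayleigh quotient $\mathbf{x}^T\Theta_n(\a)\mathbf{x}/\mathbf{x}^TP_n(\a)\mathbf{x}$, so your chain does not deliver the spectral equivalence. In fact the lower bound $\lambda_{\min}(P_n^{-1}(\a)\Theta_n(\a))\ge d>0$ is the delicate half of the theorem: in the paper and in \cite{ST-NA-2001,ST-ETNA-2003} it comes from the same asymptotic expansion used for the clustering together with $\|\Theta_n^{-1}(1)\|_2\le c_1h^{-2}$ (the order-two zero of the Toeplitz symbol), the upper bound $\lambda_{\max}\le 1+c_1(\|F_n(\a)\|_2+o(1)\|G_n(\a)\|_2)$ being immediate and the lower bound $\lambda_{\min}\ge\bigl(1+\tilde c\,\tfrac{\max_\Omega\a}{\min_\Omega\a}(\|F_n(\a)\|_2+o(1)\|G_n(\a)\|_2)\bigr)^{-1}$ requiring a separate positivity argument. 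You already have all the ingredients in part (b); part (a) must be derived from them rather than from the Loewner chaining.
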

\ \newline
The extension of this claim in the case of the matrix
sequence $\{\mathrm{Re}(A_n(\a,\b))\}$ with
$\mathrm{Re}(A_n(\a,\b)) \ne \Theta_n(\a)$ can be proved
under the additional assumptions of Lemma \ref{lemma:normaE}. \newline
\begin{theorem} \label{teo:clu+se_ReA}
Let $\{\mathrm{Re}(A_n(\a,\b))\}$ and $\{P_n(\a)\}$ be the
Hermitian positive definite matrix sequences defined according to
\emph{(\ref{eq:def_ReA})} and \emph{(\ref{eq:def_P})}. 
%
%
Under the assumptions in \emph{(\ref{eq:ipotesi_coefficienti})}, then
the sequence $\{P_n^{-1}(\a) \mathrm{Re}(A_n(\a,\b))\}$ is
properly clustered at $1$.
Moreover, for any $n$ all the eigenvalues of $P_n^{-1}(\a)
\mathrm{Re}(A_n(\a,\b))$ belong to an interval $[d,D]$ well
separated from zero {\em [Spectral equivalence property]}.
\end{theorem}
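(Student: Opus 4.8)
The strategy is to write $P_n^{-1}(\a)\mathrm{Re}(A_n(\a,\b)) = P_n^{-1}(\a)\Theta_n(\a) + P_n^{-1}(\a)E_n(\b)$ and treat the second term as a small perturbation of the first, for which Theorem~\ref{teo:clu+se_b0} already gives a proper cluster at $1$ together with the spectral equivalence $\lambda\bigl(P_n^{-1}(\a)\Theta_n(\a)\bigr)\subseteq[d,D]$ with $d>0$. The key quantitative input is Lemma~\ref{lemma:normaE}, which bounds $\|E_n(\b)\|_2\le Ch^2$, combined with the remark made just before this theorem that on the structured uniform mesh $P_n(\a)\ge \Theta_n(\a)\ge c h^2 I_n$ (up to absolute constants), hence $\|P_n^{-1}(\a)\|_2\le c^{-1}h^{-2}$. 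Therefore $\|P_n^{-1}(\a)E_n(\b)\|_2 \le \|P_n^{-1/2}(\a)\|_2^2\,\|E_n(\b)\|_2$, and since $\|P_n^{-1/2}(\a)\|_2^2=\|P_n^{-1}(\a)\|_2 = O(h^{-2})$ while $\|E_n(\b)\|_2=O(h^2)$, the product is bounded by an absolute constant independent of $n$. This is the crux: the perturbation $P_n^{-1}(\a)E_n(\b)$ is uniformly bounded in spectral norm, but \emph{not} small — so one cannot immediately conclude the cluster survives.

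To upgrade ``uniformly bounded'' to ``preserves the proper cluster at $1$'', I would work with the symmetrized matrix $P_n^{-1/2}(\a)\mathrm{Re}(A_n(\a,\b))P_n^{-1/2}(\a) = P_n^{-1/2}(\a)\Theta_n(\a)P_n^{-1/2}(\a) + R_n$, where $R_n = P_n^{-1/2}(\a)E_n(\b)P_n^{-1/2}(\a)$ is Hermitian with $\|R_n\|_2=O(1)$; this matrix is similar to $P_n^{-1}(\a)\mathrm{Re}(A_n(\a,\b))$, so it has the same eigenvalues. The standard Toeplitz/GLT-type argument is to split $R_n$ into a low-rank part plus a small-norm part: given $\varepsilon>0$, use the decay of the entries of $E_n(\b)$ (each entry is $O(h^2)$ and supported on a band, as in the proof of Lemma~\ref{lemma:normaE}) together with the spectral structure of $P_n^{-1}(\a)$ to write $R_n = \widetilde R_n^{(\varepsilon)} + N_n^{(\varepsilon)}$ with $\operatorname{rank}(\widetilde R_n^{(\varepsilon)}) \le r(\varepsilon)$ independent of $n$ and $\|N_n^{(\varepsilon)}\|_2\le \varepsilon$. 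Then Weyl's interlacing inequalities applied to $P_n^{-1/2}(\a)\Theta_n(\a)P_n^{-1/2}(\a)+N_n^{(\varepsilon)}$ (which is properly clustered at $1$ with only $O(1)$ outliers by Theorem~\ref{teo:clu+se_b0}) show that at most $r(\varepsilon)+O(1)$ eigenvalues of $R_n$'s companion sum lie outside $(1-2\varepsilon,1+2\varepsilon)$, i.e. the proper cluster at $1$ is retained.

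For the spectral equivalence statement, the lower bound is the place to be careful: $\mathrm{Re}(A_n(\a,\b)) = \Theta_n(\a)+E_n(\b) \ge (c-C)h^2 I_n$ needs $c>C$, which by Lemma~\ref{lemma:normaE} holds provided $\sup_\Omega|\mathrm{div}\,\b|$ (equivalently the constant $C$) is small enough relative to $a_0$; under hypotheses~(\ref{eq:ipotesi_coefficienti}) together with this coercivity/smallness requirement one gets $\mathrm{Re}(A_n(\a,\b))$ positive definite, and then for any unit vector $v$, writing $w = P_n^{-1/2}(\a)v$, the ratio $v^H P_n^{-1/2}(\a)\mathrm{Re}(A_n(\a,\b))P_n^{-1/2}(\a)v = w^H\Theta_n(\a)w + w^H E_n(\b)w$ is bounded below by $d - \|P_n^{-1}(\a)\|_2\|E_n(\b)\|_2 \ge d - c^{-1}C > 0$ (shrinking $C$ if necessary) and above by $D + c^{-1}C$, using Theorem~\ref{teo:clu+se_b0} for the $\Theta_n$ part. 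I expect the main obstacle to be the rank/norm splitting of $R_n$ in the second paragraph: one must argue carefully that the $O(h^2)$ pointwise bound on $E_n(\b)$, after conjugation by $P_n^{-1/2}(\a)$ whose norm blows up like $h^{-1}$ on each side, still yields a matrix whose ``non-negligible'' part has $n$-independent rank — this is exactly where the Toeplitz/GLT machinery cited in the paper (and the banded, localized structure of $E_n(\b)$) does the real work, rather than a crude norm estimate.
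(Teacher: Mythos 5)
Your overall strategy is the paper's: symmetrize, treat $E_n(\b)$ as an $O(h^2)$-norm perturbation via Lemma \ref{lemma:normaE}, and recover the proper cluster by a low-rank-plus-small-norm splitting followed by interlacing. But the one step you yourself flag as ``the main obstacle'' is exactly the step you have not supplied, and your instinct for where it comes from is misplaced. There is no useful ``decay of the entries'' of $E_n(\b)$: it is a banded matrix whose nonzero entries are all of the same order $h^2$, and it carries no low-rank structure of its own. The splitting in the paper's proof comes entirely from the spectrum of $\Theta_n(1)$: for any $\varepsilon>0$, the number of eigenvalues of the Toeplitz matrix $\Theta_n(1)$ smaller than $\lceil\varepsilon^{-1}\rceil^{-1}\!\cdot\!$(no) --- more precisely, smaller than $\lceil \varepsilon^{-1}\rceil h^2$ up to the normalization used there --- is $O(\lceil\varepsilon^{-1}\rceil)$ uniformly in $n$, because the generating function has a zero of order two. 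Projecting onto the span $U$ of the remaining eigenvectors, one gets $\|D_p U^T E_n^*(\b) U D_p\|_2 \le (\lceil\varepsilon^{-1}\rceil h^2)^{-1}\, C h^2/\min_\Omega \a = O(\varepsilon)$, i.e.\ small norm on a subspace of codimension $O(\varepsilon^{-1})$, and Cauchy interlacing then yields the proper cluster. Without this extreme-eigenvalue count the argument does not close, since, as you correctly observe, the crude estimate only gives $\|P_n^{-1}(\a)E_n(\b)\|_2=O(1)$, which is compatible with destroying the cluster entirely.

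Second, your lower bound imports a smallness hypothesis on $\sup_\Omega|\mathrm{div}\,\b|$ that the theorem neither assumes nor needs. Under (\ref{eq:ipotesi_coefficienti}) one has $\mathrm{div}\,\b\ge 0$, and the Green-formula computation in Lemma \ref{lemma:normaE} gives $(E_n(\b))_{ij}=\frac12\int_\Omega \mathrm{div}(\b)\,\varphi_i\varphi_j$, whence $v^T E_n(\b)v=\frac12\int_\Omega \mathrm{div}(\b)\bigl(\sum_i v_i\varphi_i\bigr)^2\ge 0$: the matrix $E_n(\b)$ is positive semidefinite. Therefore $\mathrm{Re}(A_n(\a,\b))\ge\Theta_n(\a)$ and the lower spectral-equivalence bound follows directly from Theorem \ref{teo:clu+se_b0}, with no restriction on the size of $\mathrm{div}\,\b$. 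As written, your argument proves only a strictly weaker statement than the one claimed.
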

\begin{proof}
%
The proof technique refers to a previously analyzed FD case \cite{ST-ETNA-2003} and it is extended
for dealing with the additional contribution given by $E_n(\b)$.\newline
First, we consider the spectral equivalence property.
Due to a similarity argument, we analyze the sequence $\{
\Theta_n^{-1}(1) (\Theta_n^*(a)+E_n^*(\b))\}$, where
$X^*=D_n^{-\frac{1}{2}}(a)XD_n^{-\frac{1}{2}}(a)$
and
$D_n(\a)=\mathrm{diag}(\Theta_n(\a))\mathrm{diag}^{-1}(\Theta_n(1))$.
According to the assumptions and Theorem 7 in \cite{ST-NA-2001}, we have the
following asymptotic expansion
\begin{equation} \label{eq:asymptotic_expansion}
\Theta_n^*(\a)=\Theta_n(1) +h^2 F_m(\a)+o(h^2)G_n(\a)
\end{equation}
so that
\[
\Theta_n^{-1}(1)\Theta_n^*(\a)=I_n+\Theta_n^{-1}(1)(h^2 F_n(\a)
+o(h^2)G_n(\a)).
\]
Taking into account the order of the zeros of the generating
function of the Toeplitz matrix $\Theta_n(1)$, we infer that there
exists a constant $c_1$ so that $\|\Theta_n^{-1}(1)\|_2
\le c_1 h^{-2}$ \cite{BG-LAA-1998,S-LAA-1998}. Moreover, it also holds
that
\begin{equation} \label{eq:norma2_Estarb}
\|E_n^*(\b)\|_2 \le \frac{C h^2}{\min_{\Omega}\a}.
\end{equation}
Therefore, by standard linear algebra, we have
\begin{eqnarray*}
\lambda_{\max}(\Theta_n^{-1}(1) (\Theta_n^*(\a)+E_n^*(\b)) & \le &  \|\Theta_n^{-1}(1) (\Theta_n^*(\a)+E_n^*(\b))\|_2 \\
& \le & \|I_n\|_2 + h^2\|\Theta_n^{-1}(1)\|_2\|F_n(\a)+o(1)G_n(\a)\|_2\\
& & \quad +  \|\Theta_n^{-1}(1)\|_2\|E_n^*(\b)\|_2\\
& \le & 1+c_1\left (\|F_n(\a)\|_2+o(1)\|G_n(\a)\|_2
+\frac{C}{\min_{\Omega} \a} \right),
\end{eqnarray*}
where $\|F_n(\a)\|_2$ and $\|G_n(\a)\|_2$ are uniformly bounded
since $F_n(\a)$ and $G_n(\a)$ are bounded symmetric sparse
%
%
matrices by virtue of Theorem 7 in \cite{ST-NA-2001}.\newline
Conversely, a bound from below for
$\lambda_{\min}(\Theta_n^{-1}(1)(\Theta_n^*(\a)+E_n^*(\b))$
requires a bit different technique with respect to
Theorem \ref{teo:clu+se_b0}, owing to the presence
of the convective term. Again by a similarity argument and by
referring to the Courant-Fisher Theorem, we have
\begin{eqnarray*}
\lambda_{\min}(\Theta_n(1)^{-1}(\Theta_n^*(\a)+E_n^*(\b))&=&
\min_{x \ne 0} \frac{\mathbf{x}^T
\Theta_n^{-\frac{1}{2}}(1)(\Theta_n^*(\a)+E_n^*(\b))\Theta_n^{-\frac{1}{2}}(1)\mathbf{x}}{\mathbf{x}^T\mathbf{x}}\\
&\ge & \lambda_{\min}(\Theta_n^{-1}(1)\Theta_n^*(\a)) \\
&& \quad + \min_{x \ne 0} \frac{\mathbf{x}^T
\Theta_n^{-\frac{1}{2}}(1)E_n^*(\b)\Theta_n^{-\frac{1}{2}}(1)\mathbf{x}}{\mathbf{x}^T\mathbf{x}}
\end{eqnarray*}
where
\[
\lambda_{\min}(\Theta_n^{-1}(1)\Theta_n^*(\a)) \ge \left( 1+
\tilde{c} \frac{\max_\Omega \a}{\min_\Omega \a} (\|F_n(\a)\|_2
+o(1)\|G_n(\a)\|_2) \right)^{-1}
\]
as proved in \cite{ST-NA-2001,ST-ETNA-2003} and
\[
\min_{x \ne 0} \frac{\mathbf{x}^T
\Theta_n^{-\frac{1}{2}}(1)E_n^*(\b)\Theta_n^{-\frac{1}{2}}(1)\mathbf{x}}{\mathbf{x}^T\mathbf{x}}
\ge \frac{\lambda_{\min}(E_n^*(\b))}{\lambda_{\min}(\Theta_n(1))}
\ge   - \frac{Ch^2}{c_2h^2\min_\Omega \a} =  - \frac{C}{c_2
\min_\Omega \a},
\]
being $\lambda_{\min}(\Theta_n(1))\le c_2h^2$.\newline
%
%
The proof of the presence of a proper cluster again makes use of
a double similarity argument and of the asymptotic expansion
in (\ref{eq:asymptotic_expansion}), so that we analyze the
spectrum of the matrices
\[
X_n=I_n+ \Theta_n^{-\frac{1}{2}}(1) (h^2F_n(\a)+o(h^2)G_n(\a) +
E_n^*(\b) ) \Theta_n^{-\frac{1}{2}}(1)
\]
similar to the matrices
$\Theta_n^{-1}(1)(\Theta_n^*(\a)+E_n^*(\b))$.\newline
As in the case of Theorem \ref{teo:clu+se_b0}, we refer to the matrix
$U \in \mathbb{R}^{n\times p}$, whose columns are made up by
considering the orthonormal eigenvectors of $\Theta_n(1)$
corresponding to the eigenvalues $\lambda_i(\Theta_n(1)) \ge
\lceil \varepsilon_n^{-1} \rceil h^2$, since we know
\cite{ST-ETNA-2003} that for any sequence $\{\varepsilon_n\}$
decreasing to zero (as slowly as wanted) it holds
\[
\# \mathcal{I}_{\varepsilon_n} = O(\lceil
\varepsilon_n^{-1}\rceil), \quad
\mathcal{{I}}_{\varepsilon_n}= \{ i | \lambda_i(\Theta_n(1)) <
\lceil \varepsilon_n^{-1} \rceil h^2 \}.
\]
Thus, we consider the following split projection
\begin{eqnarray*}
U^T X_n U= I_p +Y_p +Z_p +W_p
\end{eqnarray*}
with $Y_p=h^2 D_p U^T F_n U D_p$, $Z_p=o(h^2) D_p U^T G_n U D_p$
and $W_p=h^2 D_p U^T E_n^*(\b) U D_p$, where
$D_p=\mathrm{diag}\left(\lambda_i^{-\frac{1}{2}}\right)_{i \notin
\mathcal{{I}}_{\varepsilon_n}} \in \mathbb{R}^{p\times p}$. The
matrices $Y_p +Z_p +W_p$ are of infinitesimal spectral norm since
all the terms $Y_p$, $Z_p$ \cite{ST-NA-2001} and $W_p$ are too. In
fact, by virtue of (\ref{eq:norma2_Estarb}) and the $D_p$ matrix
definition we have
\[
\|W_p\|_2 \le \|D_p\|_2^2 \|E_n^*(\b)\|_2 \le \frac{1}{\lceil
\varepsilon_n^{-1}\rceil h^2} \frac{C h^2}{\min_{\Omega}\a} \le
\frac{C}{\min_{\Omega}\a} \lceil \varepsilon_n \rceil .
\]
Therefore, by applying the Cauchy interlacing theorem
\cite{GVL-1996}, 
it directly follows that for
any $\varepsilon>0$ there exists $\bar{n}$ such that for any
$n>\bar{n}$ (with respect to the
%
%
the ordering induced by the considered mesh family) at least $n-
O(\lceil \varepsilon_n^{-1}\rceil)$ eigenvalues of the
preconditioned
matrix  
belong to the open interval $(1-\varepsilon,1+\varepsilon)$.
\end{proof}
\ \newline
\begin{remark}
The claim of Theorem \emph{\ref{teo:clu+se_ReA}} holds both in the case in which
the matrix elements in \emph{(\ref{eq:def_theta})}
and \emph{(\ref{eq:def_psi})} are evaluated exactly and whenever a
quadrature formula with error $O(h^2)$ is consider to approximate
the involved integrals. \newline
\end{remark}
\ \newline
The previous results prove the optimality both of the PHSS method and
of the PCG, when applied in the IPHSS method for the inner iterations. However, still in the
case of the IPHSS method, suitable spectral properties of the preconditioned matrix sequence
$\{P_n^{-1}(\a)\mathrm{Im}(A_n(\a,\b))\}$ have to be proven with respect to the PGMRES
application. \newline
Hereafter, even if we make direct reference to the spectral Toeplitz theory, we prefer
to preliminary analyze the matrices by considering the standard FE assembling procedure.
Indeed, the FE elementary matrices suggest the \emph{local domain analysis} approach
in a more natural way than in the FD case.
More precisely, this purely linear algebra technique consists in an additive decomposition
of the matrix in simpler matrices allowing a proper majorization for any single term
(see also \cite{BGST-NM-2005,BS-SINUM-2007}).
\ \newline
\begin{theorem} \label{teo:clu+sb_ImA}
Let $\{\mathrm{Im}(A_n(\a,\b))\}$ and $\{P_n(\a)\}$ be the
Hermitian matrix sequences defined according to
\emph{(\ref{eq:def_ImA})} and \emph{(\ref{eq:def_P})}.
%
%
Under the assumptions in \emph{(\ref{eq:ipotesi_coefficienti})}, then 
the sequence $\{P_n^{-1}(\a) \mathrm{Im}(A_n(\a,\b))\}$ is
spectrally bounded and properly clustered at $0$ with respect to
the eigenvalues.
\end{theorem}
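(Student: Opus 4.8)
The statement to prove is that $\{P_n^{-1}(\a)\mathrm{Im}(A_n(\a,\b))\}$ is spectrally bounded and properly (strongly) clustered at $0$. Since $P_n(\a)$ is Hermitian positive definite and $\mathrm{i}\,\mathrm{Im}(A_n(\a,\b)) = \mathrm{i}\,\mathrm{Im}(\Psi_n(\b)) = \Psi_n(\b) - E_n(\b)$, the analysis reduces by a similarity transformation ($X^* = D_n^{-1/2}(\a)XD_n^{-1/2}(\a)$, exactly as in the proof of Theorem~\ref{teo:clu+se_ReA}) to the matrix $\Theta_n^{-1/2}(1)\bigl((\Psi_n^*(\b) - E_n^*(\b)) - (\Psi_n^*(\b) - E_n^*(\b))^T\bigr)/2 \cdot \Theta_n^{-1/2}(1)$, i.e. to understanding the skew-symmetric part $S_n(\b) := \tfrac{1}{2}(\Psi_n(\b) - \Psi_n^T(\b))$ against the Laplacian $\Theta_n(1)$. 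The key structural fact to extract is that $\|S_n(\b)\|_\infty \le C' h$ for an absolute constant depending only on $\b$ and $\Omega$: from~(\ref{eq:def_psi}), $(\Psi_n(\b))_{i,j} = -\int_\Omega (\nabla\varphi_i\cdot\b)\varphi_j$, and since $\|\nabla\varphi_i\|_\infty = O(h^{-1})$ on a support of measure $O(h^2)$ while $|\varphi_j|\le 1$, each entry is $O(h)$; bounded row support then gives the $\infty$-norm bound and hence $\|S_n(\b)\|_2 \le C' h$. This is the \emph{local domain analysis} the authors refer to: decompose $\Psi_n(\b)$ as a sum of elementary-matrix contributions and majorize each.

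\textbf{Spectral boundedness.} Combining $\|S_n^*(\b)\|_2 \le C'h/\min_\Omega\a$ (the diagonal scaling $D_n(\a)$ is bounded above and below by absolute constants times $I_n$, as in Theorem~\ref{teo:clu+se_ReA}) with the known bound $\|\Theta_n^{-1}(1)\|_2 \le c_1 h^{-2}$ would only give an $O(h^{-1})$ estimate, which is \emph{not} enough. The fix is to use the square-root splitting: write the matrix of interest as $\Theta_n^{-1/2}(1) S_n^*(\b)\Theta_n^{-1/2}(1)$ and bound its spectral norm by $\|\Theta_n^{-1/2}(1)\|_2^2 \|S_n^*(\b)\|_2 \le c_1 h^{-2}\cdot C'h/\min_\Omega\a = O(h^{-1})$ — still not enough. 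The genuinely correct route is to recognize that $S_n(\b)$ is itself, up to lower-order terms, of the form $B_n \Theta_n(1)^{1/2}$-controllable, or more precisely to use that the convective elementary matrices have entries that are \emph{first-order finite differences}, so that $S_n(\b) = \Theta_n(1)^{1/2} R_n$ with $\|R_n\|_2 = O(1)$; equivalently $\|\Theta_n^{-1/2}(1) S_n^*(\b)\|_2 = O(1)$. Concretely, for $\b$ constant and the structured mesh of Figure~\ref{fig:mesh_strutturata_uniforme}, $S_n(\b)$ is (a combination of) the standard skew-symmetric central-difference stencils $h\,\mathrm{trid}(-1,0,1)$ in each coordinate direction, and the classical estimate $\|\mathrm{trid}(-1,0,1)\|_2 \le \|\,(\text{1D Laplacian})\,\|_2^{1/2}\cdot O(1)$ — proved by factoring the tridiagonal skew-symmetric matrix through the discrete gradient — yields $\|\Theta_n^{-1/2}(1)S_n^*(\b)\Theta_n^{-1/2}(1)\|_2 = O(1)$ uniformly in $n$. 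For variable $\b\in\mathbf{C}^1(\overline\Omega)$ one writes $\b(\mathbf{x}) = \b(\mathbf{x}_K) + O(h)$ on each element $K$, handles the frozen-coefficient piece by the constant-coefficient estimate plus the decomposition results of Theorem~7 in~\cite{ST-NA-2001}, and absorbs the $O(h)$ remainder into the infinitesimal part via Lemma~\ref{lemma:normaE}-type entrywise bounds. The $-E_n^*(\b)$ term contributes $O(h^2)\cdot h^{-2} = O(1)$ by~(\ref{eq:norma2_Estarb}), in fact $o(1)$.

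\textbf{Proper clustering at $0$.} This runs exactly parallel to the final part of the proof of Theorem~\ref{teo:clu+se_ReA}. Let $U\in\mathbb{R}^{n\times p}$ collect the orthonormal eigenvectors of $\Theta_n(1)$ for eigenvalues $\lambda_i(\Theta_n(1)) \ge \lceil\varepsilon_n^{-1}\rceil h^2$, so that $\#\mathcal{I}_{\varepsilon_n} = O(\lceil\varepsilon_n^{-1}\rceil)$ by~\cite{ST-ETNA-2003}, and set $D_p = \mathrm{diag}(\lambda_i^{-1/2})_{i\notin\mathcal{I}_{\varepsilon_n}}$. Form the compression $U^T\bigl(\Theta_n^{-1/2}(1)(S_n^*(\b) - \tfrac{1}{2}(E_n^*(\b)-E_n^{*T}(\b)))\Theta_n^{-1/2}(1)\bigr)U = D_p U^T S_n^*(\b) U D_p - D_p U^T(\cdots)U D_p$ and estimate its spectral norm by $\|D_p\|_2^2$ times the norm of the middle factor. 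Here one must again avoid throwing away the $h^{-2}$ in $\|D_p\|_2^2 \le 1/(\lceil\varepsilon_n^{-1}\rceil h^2)$ against only an $O(h)$ bound for $S_n^*(\b)$; instead, using $S_n^*(\b) = \Theta_n^{1/2}(1) R_n$ with $\|R_n\|_2 = O(1)$, one gets $\|D_p U^T \Theta_n^{1/2}(1) R_n U D_p\|_2$, and since $U^T\Theta_n^{1/2}(1) = D_p^{-1}U^T$ on the relevant subspace this collapses to $\|U^T R_n U D_p\|_2 \le \|R_n\|_2 \|D_p\|_2 = O(1)\cdot(\lceil\varepsilon_n^{-1}\rceil h^2)^{-1/2}$, which tends to $0$ as $n\to\infty$ since $h\to 0$ and $\varepsilon_n$ decays arbitrarily slowly. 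Thus the compressed matrix has infinitesimal spectral norm, and the Cauchy interlacing theorem~\cite{GVL-1996} gives: for every $\varepsilon>0$ there is $\bar n$ such that for $n>\bar n$ at least $n - O(\lceil\varepsilon_n^{-1}\rceil)$ eigenvalues of $P_n^{-1}(\a)\mathrm{Im}(A_n(\a,\b))$ lie in $(-\varepsilon,\varepsilon)$, i.e.\ a proper cluster at $0$.

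\textbf{Main obstacle.} The delicate point — and the reason this needs the ``local domain'' decomposition rather than a crude norm estimate — is the cancellation between the $h^{-2}$ blow-up of $\Theta_n^{-1}(1)$ and the convective matrix: a naive bound $\|S_n(\b)\|_2 \le C'h$ loses a full power of $h$ and would only yield $O(h^{-1})$, failing both boundedness and clustering. One genuinely needs the sharper structural statement that the skew-symmetric convective matrix is controlled by the \emph{square root} of the Laplacian (equivalently, that its "symbol" vanishes to order one where the Laplacian symbol vanishes to order two), which for constant $\b$ is the elementary identity for central-difference stencils and for variable $\b$ requires the asymptotic-expansion machinery of~\cite{ST-NA-2001} together with a freezing-coefficient argument; pushing that through uniformly in $n$, and checking it survives the diagonal scaling by $D_n(\a)$ and the $E_n^*(\b)$ perturbation, is where the real work lies.
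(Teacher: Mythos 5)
You have correctly identified the crux — a naive bound $\|S_n(\b)\|_2=O(h)$ against $\|\Theta_n^{-1}(1)\|_2=O(h^{-2})$ loses a power of $h$, and one must exploit that the skew-symmetric convective part is controlled by the \emph{square root} of the stiffness matrix — and your overall architecture (similarity reduction, compression onto the eigenvectors of $\Theta_n(1)$ above the threshold $\lceil\varepsilon_n^{-1}\rceil h^2$, Cauchy interlacing) is the same circle of ideas as the paper's. However, the central structural estimate is stated with the wrong power of $h$, and with the version you wrote down neither conclusion follows. You claim $S_n(\b)=\Theta_n^{1/2}(1)R_n$ with $\|R_n\|_2=O(1)$, ``equivalently $\|\Theta_n^{-1/2}(1)S_n^*(\b)\|_2=O(1)$.'' From this, boundedness gives only $\|\Theta_n^{-1/2}(1)S_n^*(\b)\Theta_n^{-1/2}(1)\|_2\le \|R_n\|_2\,\|\Theta_n^{-1/2}(1)\|_2=O(1)\cdot O(h^{-1})$, i.e.\ you are back to the $O(h^{-1})$ estimate you set out to avoid. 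Worse, in the clustering step your final bound $\|R_n\|_2\|D_p\|_2=O(1)\cdot(\lceil\varepsilon_n^{-1}\rceil h^2)^{-1/2}\ge c\,\varepsilon_n^{1/2}h^{-1}$ \emph{diverges} as $h\to 0$ for any admissible $\varepsilon_n$ (taking $\varepsilon_n=o(h^2)$ to kill it would make $\#\mathcal{I}_{\varepsilon_n}=O(\varepsilon_n^{-1})\gg n$, destroying properness); the assertion that it ``tends to $0$'' is an arithmetic error. What is actually true, and what your whole proof hinges on, is the sharper statement $\|\Theta_n^{-1/2}(1)S_n(\b)\|_2=O(h)$ — equivalently $\pm S_n(\b)\le C h\,\Theta_n^{1/2}(1)$ as quadratic forms (check the symbols: $2\sin\theta$ versus $4\sin^{2}(\theta/2)$ gives $|2\sin\theta|\le 2\sqrt{4\sin^{2}(\theta/2)}$, so the factor you call $R_n$ carries the extra $h$). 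With that corrected exponent both steps close: boundedness becomes $O(h)\cdot O(h^{-1})=O(1)$ and the compressed norm becomes $O(h)\cdot O(\varepsilon_n^{1/2}h^{-1})=O(\varepsilon_n^{1/2})\to 0$. As it stands, though, the estimate is both misstated and not proved — for constant $\b$ you appeal to an unproved ``classical estimate,'' and for variable $\b$ to an unexecuted freezing-coefficient argument — so the proof has a genuine gap.

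It is worth noting how the paper sidesteps all of this: it never forms $\Theta_n^{1/2}(1)$. Working element by element, it proves the positive-semidefinite majorization $\pm\mathrm{Im}(A_{el}^{K}(\a,\b))\le w h\|\b\|_\infty(\hbox{local stiffness}+2\hat h^{2}I)$ valid for every $w\ge(\hat h\sqrt{\hat h^{2}+2})^{-1}$, sums over $K$ by linearity to get $\pm\mathrm{Im}(A_n(\a,\b))\le w\tilde c h\|\b\|_\infty(P_n(\a)+\check c\hat h^{2}I_n)$, and only then optimizes the free parameters ($w=\hat h^{-1}$, $\hat h=h\varepsilon^{-1}$) and counts the $O(\varepsilon^{-1})$ small eigenvalues of $\Theta_n(1)$. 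This parametrized family of inequalities is exactly the arithmetic–geometric-mean version of ``$\pm S_n\le Ch\,\Theta_n^{1/2}(1)$,'' but it requires no Toeplitz symbol calculus, no coefficient freezing, and no structured mesh for the key majorization itself. If you repair the exponent in your factorization, your route works for the structured-mesh case, but you would still owe a proof of the $O(h)$ bound for variable $\b$; the elementwise inequality is the cleaner way to get it.
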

\begin{proof}
Clearly, by referring to the standard assembling procedure, the Hermitian matrix
$\mathrm{Im}(A_n(\a,\b))$ can be represented as
\begin{equation*}
\mathrm{Im}(A_n(\a,\b)) = \sum_{K\in \mathcal{T}_h}
\mathrm{Im}(A_{n}^{K}(\a,\b))
\end{equation*}
with the elementary matrix corresponding to $\mathrm{Im}(A_{n}^{K}(\a,\b))$ given by
\begin{equation*}
\mathrm{Im}(A_{el}^{K}(\a,\b)) = - \frac{\mathrm{i}}{2} \left[
\begin{array}{ccc}
0 & -(\gamma_{12}-\gamma_{21}) & -(\gamma_{13}-\gamma_{31}) \\
\gamma_{12}-\gamma_{21}& 0 & -(\gamma_{23}-\gamma_{32}) \\
\gamma_{13}-\gamma_{31} & \gamma_{23}-\gamma_{32} & 0 \\
\end{array}
\right].
\end{equation*}
Here $\gamma_{ij}=\gamma(\varphi_i, \varphi_j) = \int_{K} (\nabla
\varphi_i \cdot \b) \ \varphi_j$ and the indices $i,j$ refer to a
local counterclockwise numbering of the nodes of the generic
element $K\in \mathcal{T}_h$.\newline
With respect the desired analysis
this matrix can be split, in turn, as
\begin{equation*}
\mathrm{Im}(A_{el}^{K}(\a,\b)) = - \frac{1}{2} \left(
(\gamma_{12}-\gamma_{21})  A_{el}^{K,1} +
(\gamma_{23}-\gamma_{32}) A_{el}^{K,2}+
(\gamma_{13}-\gamma_{31}) A_{el}^{K,3}
\right),
\end{equation*}
where
\begin{equation*}
A_{el}^{K,1}= \mathrm{i} \left[
\begin{array}{ccc}
0 & -1 & 0 \\
1 & 0 & 0 \\
0 & 0 & 0 \\
\end{array}
\right],\
A_{el}^{K,2}= \mathrm{i} \left[
\begin{array}{ccc}
0 & 0 & 0 \\
0 & 0 & -1 \\
0 & 1& 0 \\
\end{array}
\right],\
A_{el}^{K,3}= \mathrm{i}\left[
\begin{array}{ccc}
0 & 0 & -1 \\
0 & 0 & 0 \\
1 & 0 & 0 \\
\end{array}
\right].
\end{equation*}
By virtue of a permutation argument, the immersion of each matrix
$A_{el}^{K,r}$, $r=1,\dots,3$, into the original matrix $\mathrm{Im}(A_n(\a,\b))$
can be associated to a null matrix of dimension $n(h)$ except for
the $2$ by $2$ block given by $A_{el}^{K,1}$ in diagonal position.
Therefore, we can refer to the same technique considered in
\cite{BGST-NM-2005}. More precisely, it holds that
\[
\pm \mathrm{i} \left[
\begin{array}{cc}
0 & -1  \\
1 & 0  \\
\end{array}
\right] \le w \left [ \begin{array}{cc}
1+\hat{h}^2 & -1  \\
-1 & 1+\hat{h}^2  \\
\end{array}
\right]
\]
provided that $w \ge (\hat{h} \sqrt{\hat{h}^2+2})^{-1}$ and also
for any constant $\tilde{\gamma}$
\[
\pm \gamma \, A_{el}^{K,1}= \pm \tilde{\gamma} \, \mathrm{i}
\left[
\begin{array}{ccc}
0 & -1 & 0 \\
1 & 0 & 0 \\
0 & 0 & 0 \\
\end{array}
\right] \le |\tilde{\gamma} |\, w \left [ \begin{array}{ccc}
1+\hat{h}^2 & -1  & 0\\
-1 & 1+\hat{h}^2  & 0\\
0 & 0  & 0\\
\end{array}
\right]  =  |\tilde{\gamma} |\, w \, R_{el}^{K,1}.
\]
Thus, by linearity and positivity, we infer that
\begin{eqnarray*}
\pm \mathrm{Im}(A_{el}^{K}(\a,\b)) &\le& \frac{w}{2} \left (
|\gamma_{12}-\gamma_{21}|  R_{el}^{K,1}  +
|\gamma_{23}-\gamma_{32}|  R_{el}^{K,2} +
|\gamma_{13}-\gamma_{31}| R_{el}^{K,3}  \right )\\
&\le & w h \|\b\|_\infty
\left[
\begin{array}{ccc}
2(1+\hat{h}^2) & -1 & -1 \\
-1 & 2(1+\hat{h}^2) & -1 \\
-1 & -1 & 2(1+\hat{h}^2) \\
\end{array}
\right]
\end{eqnarray*}
since
\[
R_{el}^{K,1} =  \left[
\begin{array}{ccc}
1+\hat{h}^2 & -1 & 0 \\
-1 & 1+\hat{h}^2 & 0 \\
0 & 0 & 0 \\
\end{array}
\right],
R_{el}^{K,2} =  \left[
\begin{array}{ccc}
0 & 0 & 0 \\
0 & 1+\hat{h}^2 & -1 \\
0 & -1& 1+\hat{h}^2 \\
\end{array}
\right],
R_{el}^{K,3}  \left[
\begin{array}{ccc}
1+\hat{h}^2 & 0 & -1 \\
0 & 0 & 0 \\
-1 & 0 & 1+\hat{h}^2 \\
\end{array}
\right]
\]
and $|\gamma_{ij}-\gamma_{ji}|\le 2 h \|\b\|_\infty$ for any $i,j$.\newline
Therefore, with respect to the matrices of dimension $n(h)$, we
obtain the following key majorization
\begin{eqnarray*} \pm \mathrm{Im}(A_n(\a,\b)) &\le &
w h \|\b\|_\infty (c \Theta_n(1)+ 2\hat{h}^2 I_n), 
\end{eqnarray*}
so that by virtue of LPOs and spectral equivalence properties
proved in Theorem \ref{teo:clu+se_ReA} it holds that
\begin{equation*}
\pm \mathrm{Im}(A_n(\a,\b)) \le
w h \|\b\|_\infty \left(\frac{c}{{\min_\Omega \a}}\Theta_n(\a)+ 2\hat{h}^2  I_n\right)
\le
w \tilde{c} h \|\b\|_\infty \left(P_n(\a)+ \check{c} \hat{h}^2  I_n\right).
\end{equation*}
In such a way the claim can be obtained according to the very same
proof technique considered in \cite{BGST-NM-2005}. In fact, by
considering the symmetrized preconditioned matrices and by
referring to the Courant-Fisher theorem, we find
\[
\pm P_n^{-\frac{1}{2}}(\a) \mathrm{Im}(A_n(\a,\b)) P_n^{-\frac{1}{2}}(\a)
\le  w \tilde{c} h \|\b\|_\infty (I_n+\check{c} \hat{h}^2  P_n^{-1}(\a))
\]
and the spectral analysis must focus on the spectral properties of
the Hermitian matrix sequences
$\{ w h (I_n+ \check{c} \hat{h}^2  P_n^{-1}(\a)) \}$.\newline
By referring to Toeplitz spectral theory, the spectral boundeness
property is proved since we simply have
\[
\lambda_{\max}(w h (I_n+ \check{c} \hat{h}^2 P_n^{-1}(\a))) \le
w h \left(1+\gamma \frac{\hat{h}^2}{h^2}\right) = \varepsilon
+ \gamma \varepsilon^{-1}
\]
independent of $h$, if we choose $w=\hat{h}^{-1}$ and $\hat{h}=h \varepsilon^{-1}$ for any given $\varepsilon >0$. \\
Moreover, thanks to the strictly positivity of $\a(\mathbf{x})$
and for the same choice of $w$ and $\hat{h}$, we infer that
\begin{eqnarray*}
\lambda_i(w h (I_n+ \check{c} \hat{h}^2 P_n^{-1}(\a)))
&=& w h (1+ \check{c} \hat{h}^2 \lambda_s^{-1}(P_n(\a))
), \quad s=n+1-i\\
&\le& \varepsilon+ \frac{\check{c}}{\min_\Omega \a}
\frac{h^2}{\varepsilon} \lambda_s^{-1}(\Theta_n(1))).
\end{eqnarray*}
Since $\# \{s \ | \ \lambda_s((\Theta_n(1))) < {h^2}/{\varepsilon}
\} = O( \varepsilon^{-1})$  the proof is over.
\end{proof}
\ \newline
\begin{remark}
The claim of Theorem \emph{\ref{teo:clu+sb_ImA}} holds also in the
case in which the matrix elements in \emph{(\ref{eq:def_psi})} are
evaluated by applying any quadrature formula with error $O(h^2)$.\newline
%
%
\end{remark}
\ \newline
To sum up, with the choice $\alpha=1$ and under the regularity assumptions
of Theorems \ref{teo:clu+se_ReA} and \ref{teo:clu+sb_ImA}, we have proven that
the PHSS method is optimally convergent (linearly, but with a convergence independent
of the matrix dimension $n(h)$ due to the spectral equivalence property). In addition,
when considering the IPHSS method, the PCG converges superlinearly owing to the
proper cluster at $1$ of the matrix sequence $\{P_n^{-1}(\a)\mathrm{Re}(A_n(\a,\b))\}$,
that clearly induces a proper cluster at $1$ for the matrix sequence
$\{I+P_n^{-1}(\a)\mathrm{Re}(A_n(\a,\b))\}$. \newline
In an analogous way, the spectral boundeness and the proper clustering at
$0$ of the matrix sequence $\{P_n^{-1}(\a)\mathrm{Im}(A_n(\a,\b))\}$ allow to claim
that PGMRES in the inner IPHSS iteration converges superlinearly when applied
to the coefficient matrices $\{I+\mathrm{i}P_n^{-1}(\a)\mathrm{Im}(A_n)\}$.\newline
\section{Numerical tests} \label{sez:numerical_tests}
Before analyzing in detail the obtained numerical results, we wish to
give a general description of the performed numerical experiments and of some implementation details.
The case of unstructured meshes is discussed at the end of the section, while further remarks on
the computational costs are reported in the next section. \newline
Hereafter, we have applied the PHSS method, with the preconditioning strategy
described in Section \ref{sez:preconditioning}, to FE approximations of the problem
(\ref{eq:modello}). First, we consider the case of $\a$ uniformly positive function and
$\b$  function vector which are regular enough as required by Lemma \ref{lemma:normaE}
and Theorems \ref{teo:clu+se_ReA} and \ref{teo:clu+sb_ImA}. The domain of integration $\Omega$ is the
simplest one, i.e., $\Omega=(0,1)^2$ and we assume Dirichlet boundary conditions.
Whenever required, the involved integrals have been approximated by means of the middle point rule (the
approximation by means of the trapezoidal rule gives rise to analogous results).\newline
As just outlined in Section \ref{sez:PHSS}, the preconditioning strategy  has been tested
by considering the PHSS formulation reported in (\ref{eq:PHSS-P}) and by applying the PCG and
PGMRES methods for the Hermitian and skew-Hermitian inner iterations, respectively.
Indeed, in principle each iteration of the PHSS method requires the exact solutions with
large matrices as defined in (\ref{eq:PHSS}), which can be impractical
in actual implementations. Thus, instead of inverting the matrices
$\alpha I+P_n^{-1}(a)\mathrm{Re}(A_n(\a,\b))$ and $\alpha I +P_n^{-1}(a)\mathrm{Im}(A_n(\a,\b))$,
the PCG and PGMRES are applied for the solution of system with coefficient
matrices $\alpha P_n(a)+\mathrm{Re}(A_n(\a,\b))$ and $\alpha P_n(a)+\mathrm{Im}(A_n(\a,\b))$,
respectively, and with $P_n(a)$ as preconditioner.
\newline
Preliminary, the numerical tests have been performed by setting the tolerances required by the inner
iterative procedures at the same value of the outer procedure tolerance. 
This allows a realistic check of the PHSS convergence properties in the case of the exact
formulation of the method.
Moreover, a significant reduction of the computational costs can be obtained
by considering the inexact formulation of the method, denoted in short as IPHSS.
In the IPHSS implementation of (\ref{eq:PHSS-P}), the inner iterations in  are switched to the
$(k+1)$--th outer step if
\begin{equation}
\frac{||r_{j,PCG}||_2}{||r_{k}||_2}\leq 0.1\,\eta^{k}, \quad
\frac{||r_{j,PGMRES}||_2}{||r_{k}||_2}\leq 0.1\,\eta^{k},
\label{eq:stop-crit}
\end{equation}
respectively, where $k$ is the current outer iteration, $\eta\in
(0,1)$,
and where $r_j$ is the residual at the $j$--th step of the present
inner iteration \cite{BGN-SIMAX-2003,BGST-NM-2005}. The reported results refer to the
case $\delta=0.9$, that typically gives the best performances.\newline
The quoted criterion is effective enough to show the behavior of the inner
and outer iterations for IPHSS. It allows to conclude that the IPHSS
and the PHSS methods have the same convergence features, but the
cost per iteration of the former is substantially reduced as evident from the
lower number of total inner iterations.\newline
It is worth stressing that more sophisticate stopping criteria may save a significant amount
of inner iterations with respect to (\ref{eq:stop-crit}). In particular, the
approximation error of the FE scheme could be taken into account to drive this tuning \cite{ANR-C-2001}.  \newline
Finally, mention has to be made to the choice of the parameter $\alpha$.
Despite the PHSS method is unconditionally convergent for any $\alpha>0$,
a suitable tuning, according to Theorem \ref{th:main}, can significantly reduce
the number of outer iterations. Clearly, the choice $\alpha=1$ is evident whenever
a cluster at $1$ of the matrix sequence $\{P_n^{-1}(a)\mathrm{Re}(A_n(\a,\b))\}$ is expected.
In the other cases, the target is to approximatively estimate the
optimal $\alpha$ value
\[
\alpha^*=\sqrt{\lambda_{\min}(P_n^{-1}\mathrm{Re}(A_n))\lambda_{\max}(P_n^{-1}\mathrm{Re}(A_n))}
\]
that makes the spectral radius of the PHSS iteration matrix bounded by
$\sigma(\alpha^*)={\sqrt{\kappa}-1}/{\sqrt{\kappa}+1}$, with
$\kappa={\lambda_{\max}(P_n^{-1}\mathrm{Re}(A_n))}/{\lambda_{\min}(P_n^{-1}\mathrm{Re}(A_n))}$
spectral condition number of $P_n^{-1}\mathrm{Re}(A_n)$, namely the Euclidean (spectral) condition number of the
symmetrized matrix.
%
%
\newline
All the reported numerical experiments are performed in Matlab,
with zero initial guess for the outer iterative solvers and
stopping criterion $||r_k||_2\leq 10^{-7}||r_0||_2$. \newline
No comparison is explicitly made with the case of the HSS method, since
the obtained results are fully comparable with those observed in the
FD approximation case \cite{BGST-NM-2005,C-Tesi-2006}. \newline
%
%
In Table \ref{tab:IT-ES1M1} we report the number of PHSS outer iterations required
to achieve the convergence for increasing values of the coefficient matrix size $n=n(h)$
when considering the FE approximation with the structured uniform mesh reported in
Figure \ref{fig:mesh_strutturata_uniforme} and with template function
$\a(x,y)=\a_1(x,y)=\exp(x+y)$, $\b(x,y)=[x\ y]^T$ satisfying the required regularity assumptions.
The averages per outer step for PCG and PGMRES iterations are also reported (the total is in brackets); the values refer to the case
of inner iteration tolerances that equals the outer iteration tolerance $tol=10^{-7}$. \newline
The numerical experiments plainly confirm the previous theoretical analysis
in Section \ref{sez:clustering}.
In particular, we observe that the outer convergence behavior does not depend on the coefficient
matrix dimension $n=n(h)$. The same holds true with respect to the PCG inner iterations, while
some dependency on $n$ is observed with respect to the PGMRES inner iterations. More precisely,
a higher number of PGMRES inner iterations is required in the first few steps of the PHSS outer iterations
for increasing $n$.
Nevertheless, a variable tolerance stopping criterion for the inner iterations as devised in the IPHSS
method is able to stabilize, or at least to strongly reduce, this sensitivity. \newline
The numerical results in Table \ref{tab:OUTLIER-ES1M1} give evidence of the strong clustering properties
when the previously defined preconditioner $P_n(\a)$ is applied. More precisely, for increasing
values of the coefficient matrix dimension $n$, we report the number of outliers of
$P_n^{-1}(\a)\mathrm{Re}(A_n(\a,\b))$ with respect to a cluster at $1$ with radius
$\delta = 0.1$ (or $\delta =0.01$): $m_-$ is the number of outliers less then $1-\delta$,
$m_+$ is the number of outliers greater then $1 +\delta$,  $p_{\mathrm{tot}}$ is the related total percentage.
In addition, we report the minimal and maximal eigenvalue of the preconditioned matrices.
The same information is reported for the matrices $iP_n^{-1}(\a)\mathrm{Im}(A_n(\a,\b))$, but
with respect to a cluster at $0$. \newline
\begin{table}
\caption{Number of PHSS/IPHSS outer iterations and average per outer step for PCG and PGMRES inner iterations
(total number of inner iterations in brackets).}
\label{tab:IT-ES1M1}
\begin{center} \footnotesize
\begin{tabular}{|c|c|cc|c|cc|}
 \hline
 \multicolumn{7}{|c|}{$\a(x,y)=\exp(x+y)$, $\b(x,y)=[x\ y]^T$}  \\
 \hline
 $n$  & PHSS & PCG & PGMRES & IPHSS & PCG & PGMRES \\
 \hline
 81    & 5 & 1.6 (8) & 2.4 (12) & 5 & 1 (5) & 1 (5)\\
 361   & 5 & 1.6 (8) & 2.8 (14) & 5 & 1 (5) & 1 (5)\\
 1521  & 5 & 1.6 (8) & 3   (15) & 5 & 1 (5) & 2 (10)\\
 6241  & 5 & 1.6 (8) & 3.2 (16) & 5 & 1 (5) & 2 (10)\\
 25281 & 5 & 1.6 (8) & 3.6 (18) & 5 & 1 (5) & 2 (10)\\
 \hline
\end{tabular}
\end{center}
\end{table}
\begin{table}
\caption{Outliers analysis.}
\label{tab:OUTLIER-ES1M1}
\begin{center} \footnotesize
\begin{tabular}{|c|ccc|cc|ccc|cc|}
 \hline
 \multicolumn{11}{|c|}{$\a(x,y)=\exp(x+y)$, $\b(x,y)=[x\ y]^T$}  \\
 \hline
 $n$ & \multicolumn{5}{|c|}{$P_n^{-1}(\a)\mathrm{Re}(A_n(\a,\b))$} & \multicolumn{5}{|c|}{$P_n^{-1}(\a)\mathrm{Im}(A_n(\a,\b))$}   \\
     & $m_-$ & $m_+$ & $p_{\mathrm{tot}}$ & $\lambda_{\min}$ & $\lambda_{\max}$  & $m_-$ & $m_+$ & $p_{\mathrm{tot}}$ & $\lambda_{\min}$ & $\lambda_{\max}$ \\
 \hline
81   &0 &0 &0\%    & 9.99e-01 & 1.04e+00 &  0 &0 &0\%    & -2.68e-02 & 2.68e-02\\
     &0 &3 &3\%    &          &          &  4 &4 &9\%    & & \\
\hline
361  &0 &0 &0\%    & 9.99e-01 & 1.04e+00 &  0 &0 &0\%    & -2.87e-02 & 2.87e-02\\
     &0 &4 &1\%    &          &          &  7 &7 &3.8\%  & & \\
\hline
1521 &0 &0 &0\%    & 9.99e-01 & 1.044e+0 &  0 &0 &0\%    & -2.93e-02 & 2.93e-02\\
     &0 &4 &0.26\% &          &          &  9 &9 &1.18\% & & \\
\hline
\end{tabular}
\end{center}
\end{table}
Despite the lack of the corresponding theoretical results, we want to test the PHSS
convergence behavior also in the case in which the regularity assumption on $\a(x,y)$
in Theorems \ref{teo:clu+se_ReA} and \ref{teo:clu+sb_ImA} are not satisfied. The analysis is motivated by
favorable known numerical results in  the case of FD approximations (see, for instance,
\cite{ST-ETNA-2003,ST-SIMAX-2003,BGST-NM-2005}) or FE approximation with only the diffusion term
\cite{ST-NA-2001}. More precisely, we consider as template the $\mathcal{C}^1$ function
$\a(x,y)=\a_2(x,y)=e^{x+|y-1/2|^{3/2}}$,
the $\mathcal{C}^0$ function $\a(x,y)=\a_3(x,y)=e^{x+|y-1/2|}$, and the piecewise constant function $\a(x,y)=\a_4(x,y)=1$ if $y<1/2$, $10$ otherwise.\newline
The number of required PHSS outer iterations is listed in Table \ref{tab:IT-ES23M1}, together
with the averages per outer step for PCG and PGMRES inner iterations (the total is in brackets).
Notice that in the case of the $\mathcal{C}^1$ or $\mathcal{C}^0$ function the outer iteration number
does not depend on the coefficient matrix dimension $n=n(h)$. The same seems to be true
with respect to the PCG inner iterations, while the PGMRES inner iterations show some influence on $n$. More precisely, this influence
lies in a higher number of PGMRES inner iterations in the first few steps of the PHSS outer iterations.
Moreover, the considered  variable tolerance stopping criterion for the inner iterations  devised in the IPHSS
method is just able to reduce this sensitivity. \newline
These remarks are in perfect agreement with the outliers analysis of the matrices $P_n^{-1}(\a)\mathrm{Re}(A_n(\a,\b))$
and $P_n^{-1}(\a)\mathrm{Im}(A_n(\a,\b))$, with respect to a cluster at $1$ and at $0$, respectively,
reported in Table \ref{tab:OUTLIER-ES23M1}, with the same notations as before. \newline
Separate mention has to be made to the case of the piecewise continuous function $\a(x,y)=\a_4(x,y)$. In fact, even if
$\{P_n^{-1}(\a)\mathrm{Im}(A_n(\a,\b))\}$ could be supposed to be strongly clustered at $0$, it is evident that
$\{P_n^{-1}(\a)\mathrm{Re}(A_n(\a,\b))\}$ is clustered at $1$, but not in a strong way: the number of the outliers grows for increasing $n$,
though their percentage is decreasing, in accordance with the notion of weak clustering
(see Table \ref{tab:OUTLIER-ES4M1}). Indeed, the number of outer iterations grows for increasing $n$ as shown in Table \ref{tab:IT-ES4M1} and
a deeper insight allows to observe that the major difficulty is, as expected, in the PCG inner iterations.
The same behavior is observed also when varying the parameter $\alpha$, in order to find the optimal setting (see Table \ref{tab:ITALPHA-ES4M1}).
\par
Lastly, we want to test our proposal in the case of other structured and unstructured meshes
generated by triangle \cite{Triangle} with a progressive refinement procedure. The first meshes in the considered
mesh sequences are reported in Figures \ref{fig:mesh_M2}-\ref{fig:mesh_M4}.\newline
Tables \ref{tab:IT-ES123M2}-\ref{tab:IT-ES123M4} report the number of required PHSS/IPHSS iterations in the case of the
previous template functions. Negligible differences in the PHSS/IPHSS outer iterations are observed for increasing
dimensions $n$. Again, some dependency on $n$ is observed with respect to the PGMRES inner iterations, due to
an higher number of PGMRES inner iterations required in the first few steps of the PHSS outer iterations in relation to
a more severe ill-conditioning.
Clearly, a more sophisticated stopping criterion may probably reduce this sensitivity. \newline
\begin{table}
\caption{Number of PHSS/IPHSS outer iterations and average per outer step for PCG and PGMRES inner iterations
(total number of inner iterations in brackets).}\label{tab:IT-ES23M1}
\begin{center} \footnotesize
\begin{tabular}{|c|c|ll|c|ll|}
 \hline
 \multicolumn{7}{|c|}{$\a_2(x,y)$, $\b(x,y)=[x \ y]^T$} \\
 \hline
 $n$  & PHSS & PCG & PGMRES & IPHSS & PCG & PGMRES  \\
 \hline
 81    & 6 &2.2 (13) &2.8 (17) & 6 & 1 (6) & 1 (6)  \\
 361   & 6 &2.2 (13) &3.2 (19) & 6 & 1 (6) & 2 (12) \\
 1521  & 6 &2.2 (13) &3.5 (21) & 6 & 1 (6) & 2 (12) \\
 6241  & 6 &2.2 (13) &4   (24) & 6 & 1 (6) & 2 (12) \\
 25281 & 6 &2.2 (13) &4.2 (25) & 6 & 1 (6) & 3 (18) \\
 \hline
 \multicolumn{7}{|c|}{$\a_3(x,y)$, $\b(x,y)=[x \ y]^T$} \\
 \hline
 $n$  & PHSS & PCG & PGMRES & IPHSS & PCG & PGMRES  \\
 \hline
 81    & 7 &1.9 (13) &2.6 (18) & 7 & 1 (7) & 1 (7)      \\
 361   & 7 &2.2 (15) &3   (21) & 7 & 1 (7) & 1.7 (12)   \\
 1521  & 7 &2.2 (15) &3.5 (24) & 7 & 1.1 (8) & 2 (14)   \\
 6241  & 7 &2.3 (16) &3.6 (25) & 7 & 1.1 (8) & 2 (14)   \\
 25281 & 7 &2.3 (16) &4   (28) & 7 & 1.1 (8) & 2.1 (15) \\
 \hline
\end{tabular}
\end{center}
\end{table}
\begin{table}
\caption{Outliers analysis.}
\label{tab:OUTLIER-ES23M1}
\begin{center} \footnotesize
\begin{tabular}{|c|ccc|cc|ccc|cc|}
\hline
\multicolumn{11}{|c|}{$\a_2(x,y)$, $\b(x,y)=[x\ y]^T$}  \\
\hline
$n$ & \multicolumn{5}{|c|}{$P_n^{-1}(\a)\mathrm{Re}(A_n(\a,\b))$}
& \multicolumn{5}{|c|}{$P_n^{-1}(\a)\mathrm{Im}(A_n(\a,\b))$} \\
\hline
81   &0 &1 &1.2\%  &9.97e-01 &1.12e+00 &0 &0 &0\%  &-4.32e-02 &4.32e-02 \\
     &0 &9 &11\%   &          &         &7 &7 &17\% && \\
\hline
361  &0 &1  &0.27\% &9.99e-01 &1.12e+00 &0  &0 &0\%  &-4.68e-02 &-4.68e-02 \\
     &0 &11 &3\%    &         &         &15 &15 &8.3\%&& \\
\hline
1521 &0 &1  &6\%    &9.99e-01 &1.12e+00 &0  &0  &0\% &-4.78e-02& 4.78e-02\\
     &0 &12 &0.79\% &         &         &21 &21 &2.8\% && \\
\hline
\multicolumn{11}{|c|}{$\a_3(x,y)$, $\b(x,y)=[x\ y]^T$}  \\
\hline
$n$ & \multicolumn{5}{|c|}{$P_n^{-1}(\a)\mathrm{Re}(A_n(\a,\b))$}
& \multicolumn{5}{|c|}{$P_n^{-1}(\a)\mathrm{Im}(A_n(\a,\b))$} \\
\hline
81   &0 &1 &1.2\%   & 9.95e-01  &  1.16e+000 & 0 &0& 0\%    & -3.97e-02 & 3.97e-02 \\
     &0 &9 &11 \%   & & & 6 &6 &14\%   & & \\
     \hline
361  &0 &1  &0.28\% & 9.97e-01  &   1.17e+00 &0  &0  &0\%   & -4.31e-02 & 4.31e-02\\
     &0 &11 &3\%    & & &13 &13 &7\%   & & \\
     \hline
1521 &0 &1  &0.07\% & 9.98e-01  &  1.18e+00 &0  &0  &0\%   & -4.40e-02 & 4.40e-02\\
     &0 &14 &0.92\% & & &18 &18 &2.4\% & & \\
\hline
%
%
\end{tabular}
\end{center}
\end{table}
\begin{table}
\caption{Outliers analysis.}
\label{tab:OUTLIER-ES4M1}
\begin{center} \footnotesize
\begin{tabular}{|c|ccc|cc|ccc|cc|}
\hline
\multicolumn{11}{|c|}{$\a_4(x,y)$, $\b(x,y)=[x\ y]^T$}  \\
\hline

$n$ & \multicolumn{5}{|c|}{$P_n^{-1}(\a)\mathrm{Re}(A_n(\a,\b))$}
& \multicolumn{5}{|c|}{$P_n^{-1}(\a)\mathrm{Im}(A_n(\a,\b))$} \\
\hline
81   &9 &7 &19\%     & 5.84e-01 &   2.09e+00  &0 &0 &0\%    & -2.23e-02 & 2.23e-02 \\
     &9 &9 &22\%     & &  &1 &1 &2.5\%  & & \\
\hline
361  &19 &17 &9.8\%  & 4.20e-01  &  2.97e+00  &0 &0 &0\%    & -2.99e-02 & 2.99e-02\\
     &19 &20 &10.8\% & &  &3 &3 &1.6\%  & & \\
\hline
1521 &39 &37 &5\%    & 2.78e-01  &  4.53e+000  &0 &0 &0\%    & -3.34e-02 & 3.34e-02\\
     &39 &40 &5.2\%  & &  &6 &6 &0.79\% & &  \\
\hline
\end{tabular}
\end{center}
\end{table}
\begin{table}
\caption{Number of PHSS/IPHSS outer iterations and average per outer step for PCG and PGMRES inner iterations
(total number of inner iterations in brackets).}\label{tab:IT-ES4M1} 
\begin{center} \footnotesize
\begin{tabular}{|c|c|ll|c|ll|}
 \hline
 \multicolumn{7}{|c|}{$\a_4(x,y)$, $\b(x,y)=[x \ y]^T$} \\
 \hline
 $n$  & PHSS & PCG & PGMRES & IPHSS & PCG & PGMRES  \\
 \hline
 81    & 13 & 3.5 (45)  & 2.2 (29) & 13 & 1.9 (25)  & 1 (13)\\
 361   & 20 & 3.9 (78)  & 2.3 (47) & 20 & 2.1 (41)  & 1 (20)\\
 1521  & 31 & 4.3 (134) & 2.4 (74) & 32 & 2.2 (70)  & 1.5 (47)  \\
\hline
\end{tabular}
\end{center}
\end{table}
\begin{table}
\caption{Number of PHSS/IPHSS outer iterations and average per outer step for PCG and PGMRES inner iterations
(total number of inner iterations in brackets) in the case of optimal $\alpha^*$ values.}\label{tab:ITALPHA-ES4M1} 
\begin{center} \footnotesize
\begin{tabular}{|c|c|c|ll|c|c|ll|}
 \hline
 \multicolumn{9}{|c|}{$\a_4(x,y)$, $\b(x,y)=[x \ y]^T$} \\
 \hline
 $n$  & PHSS & $\alpha^*$ & PCG & PGMRES  & IPHSS & $\alpha^*$ & PCG & PGMRES \\
 \hline
 81    & 12 & (1.068,1.12)   & 3.8 (45)  & 2.2 (27)  
       & 12 & (1.066,1.114)  & 2   (24)  & 1 (12)  \\ 
 361   & 19 & (1.04,1.1656)  & 4.1 (77)  & 2.4 (45)  
       & 18 & (1.088,1.1024) & 2.1 (37)  & 1 (18)  \\ 
 1521  & 29 & (1.02,1.0736)  & 4.5 (131) & 2.4 (70)  
       & 30 & (1.0526, 1.16) & 2.1 (64)  & 1.4 (43)  \\ 
\hline
\end{tabular}
\end{center}
\end{table}
\begin{table}
\caption{Number of PHSS/IPHSS outer iterations and average per outer step for PCG and PGMRES inner iterations
(total number of inner iterations in brackets) - meshes in Fig. \ref{fig:mesh_M2}.}\label{tab:IT-ES123M2}
\begin{center} \footnotesize
\begin{tabular}{|l|l|ll|l|ll|}
 \hline
  \multicolumn{7}{|c|}{$\a_1(x,y)$, $\b(x,y)=[x \ y]^T$ }  \\
 \hline
 $n$  & PHSS & PCG & PGMRES & IPHSS & PCG & PGMRES  \\ \hline
 41    &5 &2.2 (11) &2.2 (11) & 5 & 1 (5) & 1 (5)\\
 181   &5 &2.2 (11) &2.6 (13) & 5 & 1 (5) & 1 (5) \\
 761   &5 &2.2 (11) &3 (15)   & 5 & 1 (5) & 1.2 (6) \\
 3121  &5 &2.2 (11) &3 (15)   & 5 & 1 (5) & 2 (10) \\
 12641 &5 &2.2 (11) &3.4 (17) & 5 & 1 (5) & 2 (10) \\
 50881 &5 &2.2 (11) &3.8 (19) & 5 & 1 (5) & 2 (10) \\
 \hline
 \multicolumn{7}{|c|}{$\a_2(x,y)$, $\b(x,y)=[x \ y]^T$ }  \\
 \hline
 $n$  & PHSS & PCG & PGMRES & IPHSS & PCG & PGMRES  \\ \hline
 41    & 6 &2.2 (13) &2.7 (16) &  6 & 1 (6) & 1 (6)  \\
 181   & 6 &2.2 (13) &3 (18)   &  6 & 1 (6) & 1 (6) \\
 761   & 6 &2.2 (13) &3.3 (20) &  6 & 1 (6) & 2 (12) \\
 3121  & 6 &2.2 (13) &3.7 (22) &  6 & 1 (6) & 2 (12) \\
 12641 & 6 &2.2 (13) &4 (24)   &  6 & 1 (6) & 2.2 (13) \\
 50881 & 6 &2.2 (13) &4.3 (26) &  6 & 1 (6) & 3 (18) \\
 \hline
 \multicolumn{7}{|c|}{$\a_3(x,y)$, $\b(x,y)=[x \ y]^T$ }  \\
 \hline
 $n$  & PHSS & PCG & PGMRES & IPHSS & PCG & PGMRES\\ \hline
 41    &7 &2.2 (15) &2.5 (17) &7 & 1 (7) & 1 (7)\\
 181   &7 &2.2 (15) &2.8 (20) &7 & 1 (7) & 1 (7)\\
 761   &7 &2.2 (15) &3.1 (22) &7 & 1.1 (8) & 2 (14)\\
 3121  &7 &2.4 (17) &3.6 (25) &7 & 1.1 (8) & 2 (14)\\
 12641 &7 &2.4 (17) &3.8 (27) &7 & 1.1 (8) & 2 (14)\\
 50881 &7 &2.4 (17) &3.9 (31) &8 & 1.1 (9) & 2.2 (18)\\
\hline
\end{tabular}
\end{center}
\end{table}
\begin{table}
\caption{Number of PHSS/IPHSS outer iterations and average per outer step for PCG and PGMRES inner iterations
(total number of inner iterations in brackets)  - meshes in Fig. \ref{fig:mesh_M3}.}\label{tab:IT-ES123M3}
\begin{center} \footnotesize
\begin{tabular}{|l|l|ll|l|ll|}
 \hline
\multicolumn{7}{|c|}{$\a_1(x,y)$, $\b(x,y)=[x \ y]^T$}  \\
 \hline
 $n$  & PHSS & PCG & PGMRES & IPHSS & PCG & PGMRES  \\ \hline
 25    & 5 &2.2 (11) &2.2(11)  & 5 & 1 (5) & 1 (5)  \\
 113   & 5 &2.2 (11) &2.4 (12) & 5 & 1 (5) & 1 (5)  \\
 481   & 5 &2.2 (11) &2.8 (14) & 5 & 1 (5) & 1 (5)  \\
 1985  & 5 &2.2 (11) &3 (15)   & 5 & 1 (5) & 2 (10) \\
 8065  & 5 &2.2 (11) &3.4 (17) & 5 & 1 (5) & 2 (10) \\
 32513 & 5 &2.2 (11) &3.6 (18) & 5 & 1 (5) & 2 (10) \\
\hline
\multicolumn{7}{|c|}{$\a_2(x,y)$, $\b(x,y)=[x \ y]^T$}  \\
\hline
 $n$  & PHSS & PCG & PGMRES & IPHSS & PCG & PGMRES  \\ \hline
 25    &6 & 2.2 (13) & 2.5 (15) & 6 & 1 (6) & 1 (6) \\
 113   &6 & 2.2 (13) & 3 (18)   & 6 & 1 (6) & 1 (6) \\
 481   &6 & 2.2 (13) & 3.2 (19) & 6 & 1 (6) & 2 (12) \\
 1985  &6 & 2.2 (13) & 3.7 (22) & 6 & 1 (6) & 2 (12) \\
 8065  &6 & 2.2 (13) &4 (24)    & 6 & 1 (6) & 2 (12)\\
 32513 &6 & 2.2 (13) &4.3 (26)  & 6 & 1 (6) & 3 (18)\\
\hline
\multicolumn{7}{|c|}{$\a_3(x,y)$, $\b(x,y)=[x \ y]^T$}  \\
\hline

 $n$  & PHSS & PCG & PGMRES & IPHSS & PCG & PGMRES  \\ \hline
 25    &6 &2.2 (13) &2.5 (15) & 6 & 1 (6)   & 1 (6) \\
 113   &7 &2 (14)   &2.7 (19) & 7 & 1 (7)   & 1 (7) \\
 481   &7 &2.1 (15) &3 (21)   & 7 & 1.1 (8) & 1.8 (13) \\
 1985  &7 &2.3 (16) &3.4 (24) & 7 & 1.1 (8) & 2 (14) \\
 8065  &7 &2.4 (17) &3.8 (27) & 7 & 1.1 (8) & 2 (14)\\
 32513 &8 &2.1 (17) &3.8 (30) & 8 & 1.1 (9) & 2.1 (17)\\
 \hline
\end{tabular}
\end{center}
\end{table}
\begin{table}
\caption{Number of PHSS/IPHSS outer iterations and average per outer step for PCG and PGMRES inner iterations
(total number of inner iterations in brackets)  - meshes in Fig. \ref{fig:mesh_M4}.}\label{tab:IT-ES123M4}
\begin{center} \footnotesize
\begin{tabular}{|l|l|ll|l|ll|}
\hline
\multicolumn{7}{|c|}{$\a_1(x,y)$, $\b(x,y)=[x \ y]^T$ } \\
\hline
 $n$  & PHSS & PCG & PGMRES & IPHSS & PCG & PGMRES  \\ \hline
 55    &5 &2.2 (11) &2.2 (11) & 5 & 1 (5) &  1 (5) \\
 142   &5 &2.2 (11) &2.6 (13) & 5 & 1 (5) &  1 (5) \\
 725   &5 &2.2 (11) &3 (15)   & 5 & 1 (5) &  1 (5)\\
 1538  &5 &2.2 (11) &3 (15)   & 5 & 1.2 (6) & 1.2 (6)\\
 7510  &5 &2.2 (11) &3 (17)   & 5 & 1.2 (6) & 1.8 (9)\\
 15690 &5 &2.2 (12) &3.4 (18) & 6 & 1.2 (7) & 2 (12)\\
 \hline
 \multicolumn{7}{|c|}{$\a_2(x,y)$, $\b(x,y)=[x \ y]^T$} \\
\hline
 $n$  & PHSS & PCG & PGMRES & IPHSS & PCG & PGMRES  \\ \hline
 55    &6 &2.2 (13) &2.7 (16) & 6 & 1 (6)   & 1 (6) \\
 142   &6 &2.2 (13) &3 (18)   & 6 & 1 (6)   & 1 (6) \\
 725   &6 &2.2 (13) &3.3 (20) & 6 & 1 (6)   & 2 (12) \\
 1538  &6 &2.2 (13) &3.5 (21) & 6 & 1.2 (7) & 2 (12)\\
 7510  &7 &2 (14)   &3.7 (26) & 7 & 1.1 (8) & 2 (14)\\
 15690 &7 &2 (14)   &3.7 (26) & 7 & 1.1 (8) & 2 (14) \\
\hline
\multicolumn{7}{|c|}{$\a_3(x,y)$, $\b(x,y)=[x \ y]^T$ } \\
\hline
 $n$  & PHSS & PCG & PGMRES & IPHSS & PCG & PGMRES  \\ \hline
 55    &7 &1.8 (13) &2.4 (17) & 7 & 1 (7)   & 1 (7) \\
 142   &7 &2.2 (15) &2.8 (20) & 7 & 1 (7)   & 1 (7) \\
 725   &7 &2.6 (18) &3.1 (22) & 7 & 1.1 (8) & 2 (14) \\
 1538  &7 &2.6 (18) &3.4 (24) & 7 & 1.1 (8) & 2 (14) \\
 7510  &7 &2.6 (18) &3.7 (26) & 7 & 1.1 (8) & 2 (14) \\
 15690 &8 &2.4 (19) &3.6 (29) & 8 & 1.1 (9) & 2 (16)\\
\hline
\end{tabular}
\end{center}
\end{table}
\begin{figure}
\centering
\vskip -0.5cm
\epsfig{file=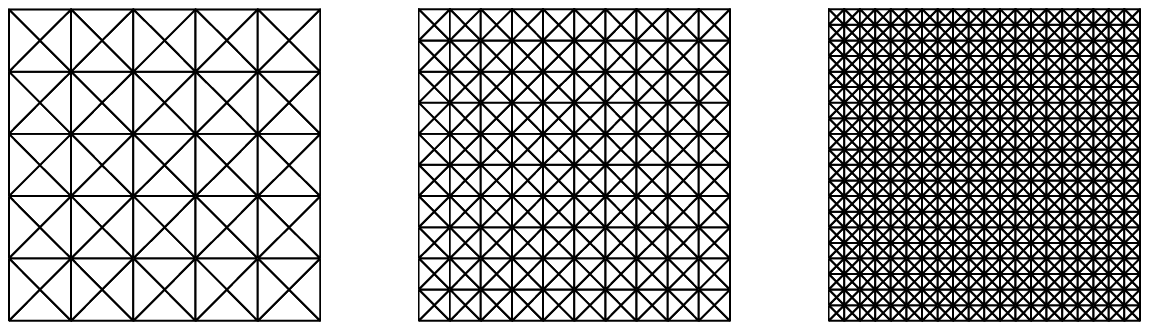,width=\textwidth}
\vskip -3.5cm
\caption{Structured meshes.} 
\label{fig:mesh_M2}
\end{figure}
\clearpage
\begin{figure}
\centering
\vskip -0.5cm
\epsfig{file=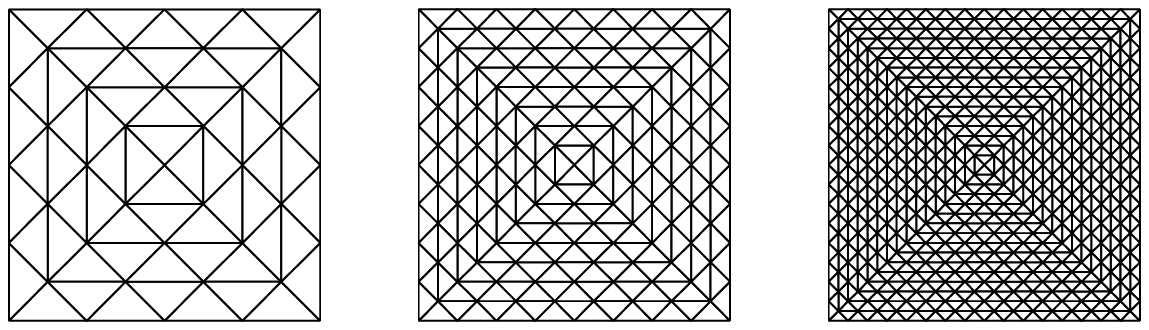,width=\textwidth}
\vskip -3.5cm
\caption{Structured meshes.} 
\label{fig:mesh_M3}
\end{figure}
\begin{figure}
\centering
\vskip -0.5cm
\epsfig{file=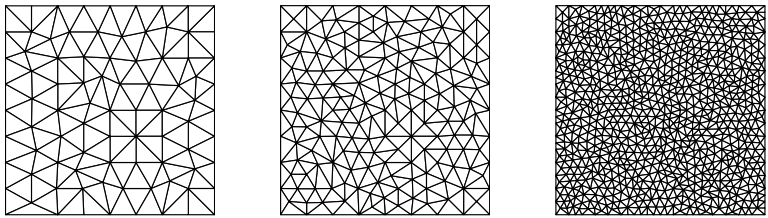,width=\textwidth}
\vskip -3.5cm
\caption{Unstructured meshes.} 
\label{fig:mesh_M4}
\end{figure}
\section{Complexity issues and perspectives} \label{sez:conclusions}
Lastly, we report some remarks about the computational costs of the proposed iterative
procedure by referring to the optimality definition below.
\ \newline
\begin{definition}\emph{\cite{AN-1993}} \label{def:opt}
Let $\{A_m \mathbf{x}_m = \mathbf{b}_m\}$ be a given sequence of
linear systems of increasing dimensions. An iterative method is
  \emph{optimal} if
  \begin{enumerate}
    \item[{\rm 1.}] the arithmetic cost of each iteration is at most
      proportional to the complexity of a matrix vector product with
      matrix $A_m$,
    \item[{\rm 2.}] the number of iterations for reaching the solution within a
      fixed accuracy can be bounded from above by a constant
      independent of $m$.
  \end{enumerate}
\end{definition}
\ \newline \noindent
In other words, the problem of solving a linear system with coefficient matrix $A_m$
is asymptotically of the same cost as the direct problem of multiplying $A_m$ by a vector.
\par
Since we are considering  the preconditioning matrix sequence $\{P_n(\a)\}$  defined as
$P_n(\a)=D_n^{{1}/{2}}(\a) A_n(1,0) D_n^{{1}/{2}}(\a)$, where $D_n(\a)=\mathrm{diag}(A_n(\a,0))\mathrm{diag}\!^{-1}(A_n(1,0))$,
the solution of the linear system in (\ref{eq:modello_discreto}) with matrix $A_n(\a,\b)$ is reduced to computations
involving diagonals and the matrix $A_n(1,0)$. \newline
As well known, whenever the domain is partitioned
by considering a uniform structured mesh this latter task can be efficiently performed by means of fast Poisson solvers,
among which we can list those based on the cyclic reduction idea (see e.g.
\cite{BDGG-SINUM-1971,D-SIAMRev-1970,S-SIAMRev-1977}) and several specialized multigrid methods
(see e.g. \cite{H-Springer-1985,S-NM-2002}).
Thus, in such a setting, and under the regularity assumptions 
(\ref{eq:ipotesi_coefficienti}), the optimality of the PHSS method is
theoretically proved: the PHSS iterations number for reaching the solution within a fixed accuracy can be bounded from above by a constant
independent of the dimension $n=n(h)$ and the arithmetic cost of each iteration is at most proportional to the complexity of a matrix vector product
with matrix $A_n(\a,\b)$. \par
Finally, we want to stress that the PHSS numerical performances do not get worse in the case of unstructured meshes.
In such cases, again, our proposal makes only use of matrix vector products (for sparse or even diagonal matrices)
and of a solver for the related diffusion equation with constant coefficient. To this end, the main effort in devising efficient
algorithms must be devoted only to this simpler problem.

%
\end{document}